\newtheorem{thm}{Theorem}[section]
\newtheorem{lemma}[theorem]{Lemma}
\newtheorem{remark}[theorem]{Remark}
\newtheorem{definition}[theorem]{Definition}
\newcommand{\bea}{\begin{eqnarray*}}
\newcommand{\eea}{\end{eqnarray*}}
\newcommand{\ben}{\begin{eqnarray}}
\newcommand{\een}{\end{eqnarray}}
\newcommand{\beq}{\begin{equation}}
\newcommand{\eeq}{\end{equation}}
\newcommand{\R}{\ensuremath{\mathbb{R}}}
\renewcommand{\d}{\partial}
\newcommand{\grad}{\mathrm{grad}}
\newcommand{\Hess}{\mathrm{Hess}}
\begin{document}

\title{On the gradient flow structure of the isotropic Landau equation}
\date{}
\author{
  Jing An%
  \thanks{Institute for Computational and Mathematical Engineering, Stanford University, Stanford, CA 94305. 
    Email: {\tt jingan@stanford.edu}},~~
  Lexing Ying%
  \thanks{Department of Mathematics and ICME, Stanford University, Stanford, CA 94305.
    Email: {\tt lexing@stanford.edu}}
}

\maketitle

\numberbysection

\begin{abstract}
  We prove that the isotropic Landau equation equipped with the Coulomb potential introduced by
  Krieger-Strain and Gualdani-Guillen can be identified with the gradient flow of the entropy in the
  probability space with respect to a Riemannian metric tensor with nonlocal mobility. We give
  characterizations of the corresponding geodesics equations and present a convergence rate result
  by estimating its Hessian operator.
\end{abstract}

\section{Introduction}
Since Otto's pioneering work on analyzing the porous medium equation \cite{otto2001geometry}, there
has been a lot of work on exploring gradient flow structures of different partial differential
equations in the space of probability measures. The gradient flow method has proven to be important
for both analytical and numerical simulation purposes, for example \cite{jordan1998variational, benamou2000computational, santambrogio2017euclidean, carrillo2003kinetic, carrillo2017numerical, liu2016stein}, just to name a few .

Although there has been a vast amount of literature on mathematical analysis of Boltzmann and Landau
equations, the investigation of their gradient flow structures has only started quite recently: for
example see \cite{erbar2016gradient,basile2017gradient} for recent analysis results on the Boltzmann
equation and \cite{carrillo2019} for a novel numerical method on approximating the homogeneous
Landau equation. Very recently, Carrillo et al. \cite{carrillo2020} carry out in-depth gradient flow analysis of the homogeneous Landau equation and provide the theoretical basis of the $\epsilon-$approximated Landau equation that \cite{carrillo2019} aims to solve. As all these results are based on the dissipation of the entropy functional, i.e.,
H-theorems, those kinetic equations can be viewed as gradient flows of the entropy with respect to
various specific geometries.

The reason of limited progress on the gradient flow approach of the Boltzmann-like kinetic equations
is the following: unlike the classical $L^2$ Wasserstein distance and generalized Wasserstein
distance regarding concave, nonlinear, and local mobilities (see for example
\cite{carrillo2010nonlinear,liero2013gradient,li2018geometry,Li2019,li2019diffusion,garbuno2019gradient}),
the metrics associated with these Boltzmann-like kinetic equations involve nonlocal mobilities,
which cause significant challenges when one tries to analyze related displacement convexity,
functional inequalities, contractions and so on.

The goal of this short note is to identify the gradient flow structure of a modified version of
classical Landau-Coulomb equations, the isotropic Landau equation, which has drawn interests within
the kinetic community recently
\cite{krieger2012global,gressman2012non,gualdani2016estimates,gualdani2018global,gualdani2018review}. Because the isotropic Landau equation ignores the projection matrix, some of its properties are different from the original Landau equation. This also makes our note distinct from other papers on the gradient flow structure of the classical Landau equation. Given
this gradient flow structure, we are able to characterize some basic geometric properties and
calculate a time-dependent convergence rate for the entropy dissipation.

\subsection{The isotropic Landau equation }

Let us first recall the homogeneous Landau equation with the Coulomb potential
\begin{align}
  \d_t \rho = \nabla \cdot (A[\rho] \nabla \rho - \rho \nabla \mathcal{L}\rho), ~~\text{with}~
  A[\rho] = \frac{1}{8\pi|x|}\bigg(I_d - \frac{x\otimes x}{|x|^2}\bigg) *\rho.
\end{align}
The modified Landau equation, which shares structural similarities with Landau equation from plasma
physics and was first considered by Krieger and Strain \cite{krieger2012global}, has the form
\begin{align}
    \d_t \rho = \mathcal{L}\rho \Delta \rho + \alpha \rho^2, ~~\text{with}~ \mathcal{L}\rho = (-\Delta)^{-1}\rho.
\end{align}
So far its global-in-time well-posedness with radial monotonic positive initial data has been proven
in \cite{gressman2012non} for $\alpha\in (0,74/75)$. When $\alpha = 1$, the above can be rewritten
as
\begin{align}\label{landau}
\d_t \rho = \nabla \cdot (\mathcal{L}\rho \nabla \rho - \rho \nabla \mathcal{L}\rho),
\end{align}
which is called the isotropic Landau equation since $A[\rho]$ is replaced by $\mathcal{L}\rho$ and
has been studied by Gualdani and her collaborators in papers
\cite{gualdani2016estimates,gualdani2018global,gualdani2018review}. We recall that the inverse
fractional Laplacian operator $(-\Delta)^{-s}$ is a Riesz potential of order $2s$ and can be
expressed as
\begin{align}
  (-\Delta)^{-s} \rho(x,t) :=c_{d,s}\int_{\R^d} \frac{\rho(y,t)}{|x-y|^{d-2s}} dy, ~~ t>0,
\end{align}
with $c_{d,s} = \frac{4^s \Gamma(d/2+s)}{|\Gamma(-s)|\pi^{d/2}}$. Here for the isotropic Landau
equation (\ref{landau}), we only consider the case where $d=3$ and $s=1$, thus $c_{3,1} =
\frac{1}{4\pi}$.

\subsection{Previous work on the isotropic Landau equation}

The global-in-time existence of smooth solutions given radially symmetric and monotonically
decreasing initial data that have finite mass, energy and entropy was shown in
\cite{gualdani2016estimates}. Later, the radial symmetry requirement was relaxed to even functions
in \cite{gualdani2018global}. Although the isotropic Landau equation (\ref{landau}) is structurally
similar to the classical Landau-Coulomb equation, in analysis it is very different in the sense that
its second moment increases in time as mentioned in \cite{gualdani2018review, gualdani2018global}. Because of that, many techniques in the classical Landau equation do not directly apply and the
dissipation computation
\begin{align}\label{dissp}
  -\frac{d}{dt} \mathcal{E}(\rho) =\frac{1}{8\pi} \int_{\R^3}\int_{\R^3}\frac{\rho(x)\rho(y)}{|x-y|}
  \bigg|\frac{\nabla \rho(x)}{\rho(x)}-\frac{\nabla \rho(y)}{\rho(y)}\bigg|^2 dxdy \geq 0
\end{align}
does not imply a Maxwellian equilibrium. In fact, the only steady solution for the isotropic Landau
equation is the identically zero solution. 

Let us summarize some conditional regularity results from
\cite{gualdani2018global,gualdani2018review}, which will be used to analyze a distance based on
the nonlocal mobility to be introduced below. By assuming that the initial data $\rho_0$ is even,
$|\mathcal{E}(\rho_0)|<+\infty$ and $||\rho_0||_{L^1} = 1$ by normalization, the following
time-dependent dissipation-Fisher information relation holds
\begin{align}\label{prevrel}
  \frac{d\mathcal{E}(\rho)}{dt} + &\kappa(t) \int_{\R^3} \frac{|\nabla \sqrt{\rho(x,t)}|^2}{1+|x|} dx \leq 0, ~~ t>0, \\
  \nonumber    &~\text{with}~\kappa =  \frac{1}{8\pi}\frac{1}{E(t)^{1/2} +1} ~\text{and}~ E(t) = \int_{\R^3} \frac{|x|^2}{2} \rho(x,t) dx.
\end{align}
In addition, the second moment of data is locally bounded in time
\begin{align*}
  E(t) \leq C_{p,\epsilon} (1+t^{2p/(2p-4+\epsilon)}),~~ t>0, ~~\frac{9}{5}<p<2, ~~4-2p<\epsilon<\frac{2}{5}.
\end{align*}
Integrating \eqref{prevrel} in time gives rise to
\begin{align}
  \int_0^T \int_{\R^3} \frac{|\nabla \sqrt{\rho}|^2}{1+|x|} dxdt \leq C_T.
\end{align}
The conditional regularity estimates in \cite{gualdani2018global, gualdani2018review} are built on
assuming the following $\varepsilon-$Poincar{\'e} inequality
\begin{align}\label{poincare}
  \int_{\R^3} \rho \phi^2  dx\leq \varepsilon \int_{\R^3} (-\Delta)^{-1} \rho |\nabla \phi|^2 dx +
  C_{\varepsilon} \int_{\R^3} \phi^2 dx, ~~\text{ for } \phi\in L_{loc}^1(\R^3),
\end{align} 

The above assumptions allows for a uniform bound in space and time for $(-\Delta)^{-1}\rho(x,t)$ and
we restate this result here.
\begin{lemma}[\cite{gualdani2018review}, Theorem 2]
  Suppose $\rho$ is a solution to the isotropic Landau equation (\ref{landau}) with even
  non-negative initial data $\rho_0$, and (\ref{poincare}) holds. For any $0<t<T$ and any $s_1>1,
  s_2>1/3$, and any ball $B_R\subset \R^3$ with arbitrary radius $R>0$, there exists constants
  $C_1(T,R,s_1), C_2(T, s_2)$ such that
  \begin{align}
    &||\rho||_{L^{\infty}(t,T;B_R)} \leq C_1(T, R,s_1) \bigg(\frac{1}{t} + 1\bigg)^{s_1}, ~~t\in(0,T),\label{fracbd0}\\
    &||(-\Delta)^{-1}\rho||_{L^{\infty}(t,T;\R^3)} \leq C_2(T, s_2) \bigg(\frac{1}{t} + 1\bigg)^{s_2}, ~~t\in(0,T).
    \label{fracbd}
  \end{align}
\end{lemma}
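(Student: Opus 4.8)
The two estimates are coupled, and I would exploit that. First I would establish \eqref{fracbd0} as a local-in-space smoothing bound for $\rho$, and then derive \eqref{fracbd} from it by a Riesz-potential splitting together with an optimization over the splitting radius — this last step is precisely what turns an exponent $s_1>1$ into an exponent $s_2>s_1/3>1/3$. So the plan has two parts: (i) a De Giorgi--Nash--Moser-type level-set iteration on shrinking parabolic cylinders to obtain $\|\rho\|_{L^\infty(t,T;B_R)}\le C(1/t+1)^{s_1}$, using the equation in divergence form, the a priori dissipation bound coming from \eqref{prevrel}, conservation of mass $\|\rho(t)\|_{L^1}=1$, and the $\varepsilon$-Poincar\'e inequality \eqref{poincare}; and (ii) a kernel-splitting argument to pass from this to the global-in-space bound on $(-\Delta)^{-1}\rho$.

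For part (i), I would rewrite \eqref{landau} as $\partial_t\rho=\nabla\cdot(\mathcal{L}\rho\,\nabla\rho)-\nabla\rho\cdot\nabla\mathcal{L}\rho+\rho^2$, using $-\Delta\mathcal{L}\rho=\rho$. For a level $k\ge 0$ and a space--time cutoff $\zeta$ supported in a cylinder, test with $(\rho-k)_+\zeta^2$. The principal term yields the good weighted energy $\iint \mathcal{L}\rho\,|\nabla(\rho-k)_+|^2\zeta^2$; the drift term $\nabla\rho\cdot\nabla\mathcal{L}\rho$ is absorbed into this energy by Cauchy--Schwarz, using that $\nabla\mathcal{L}\rho$ is a Riesz potential of $\rho$ of order one; and, crucially, the supercritical reaction $\iint \rho\,(\rho-k)_+^2\zeta^2$ is absorbed into the same energy by applying \eqref{poincare} with $\phi=(\rho-k)_+\zeta$, at the price of a lower-order term $\iint (\rho-k)_+^2$. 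One then closes a De Giorgi energy inequality, upgrades from the weighted energy to an $L^{2+\delta}$ gain on sublevel sets through a (weighted) Sobolev embedding, and iterates over $k_n=K(1-2^{-n})$ with cylinders shrinking in both space and time. The time shrinking is what manufactures the singular prefactor $(1/t+1)^{s_1}$; the freedom $s_1>1$ reflects the dimensional loss of the iteration (of order $d/2$ with $d=3$). The bound $\int_0^T\int_{\R^3}\frac{|\nabla\sqrt{\rho}|^2}{1+|x|}\,dx\,dt\le C_T$ extracted from \eqref{prevrel}, together with mass conservation, supplies the finiteness needed to start and propagate the iteration.

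For part (ii), fix $t\in(0,T)$ and $x\in\R^3$ and write
\begin{align*}
(-\Delta)^{-1}\rho(x,t)=\frac{1}{4\pi}\int_{|x-y|<R}\frac{\rho(y,t)}{|x-y|}\,dy+\frac{1}{4\pi}\int_{|x-y|\ge R}\frac{\rho(y,t)}{|x-y|}\,dy.
\end{align*}
The near part is at most $\frac{1}{4\pi}\|\rho\|_{L^\infty(t,T;B_R(x))}\int_{|z|<R}|z|^{-1}\,dz=\frac{R^2}{2}\|\rho\|_{L^\infty(t,T;B_R(x))}\le C(T,R,s_1)\,R^2(1/t+1)^{s_1}$ by \eqref{fracbd0}, while the far part is at most $\frac{1}{4\pi R}\|\rho(t)\|_{L^1}=\frac{1}{4\pi R}$. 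Choosing $R\sim (1/t+1)^{-s_1/3}$ balances the two and yields $\|(-\Delta)^{-1}\rho(t)\|_{L^\infty(\R^3)}\le C(T,s_2)(1/t+1)^{s_1/3}$; since $s_1$ may be taken to be any number $>1$, this gives \eqref{fracbd} for every $s_2>1/3$. The main obstacle in the whole scheme is that the nonlocal mobility $\mathcal{L}\rho$ is neither bounded above nor bounded below a priori, so one cannot freeze it and run a standard uniformly parabolic iteration: the weight $\mathcal{L}\rho$ has to be carried through every step, and the only thing that keeps the supercritical reaction $\rho^2$ from destroying the iteration is the structural assumption \eqref{poincare} — without it the iteration does not close.
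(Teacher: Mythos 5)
This lemma is not proved in the paper at all: it is imported verbatim from the cited reference (Theorem~2 of \cite{gualdani2018review}), and the authors say explicitly that they are merely restating it. So there is no in-paper argument to compare yours against; I can only judge the proposal on its own terms and against the strategy of the cited source.

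Your part (ii) is correct, complete, and is indeed the standard way \eqref{fracbd} follows from \eqref{fracbd0}: the near/far splitting of the Riesz kernel, the bounds $\tfrac{R^2}{2}\|\rho\|_{L^\infty}$ and $\tfrac{1}{4\pi R}\|\rho\|_{L^1}$, and the optimization $R\sim(1/t+1)^{-s_1/3}$ exactly reproduce the exponent relation $s_2=s_1/3>1/3$ from $s_1>1$. Two small caveats: the argument needs the constant in \eqref{fracbd0} to be uniform over the center of the ball (the notation $C_1(T,R,s_1)$ suggests this but the quantifier order in the statement does not literally guarantee it), and since you let $R$ depend on $t$ you should note that for $R\le 1$ one may replace $C_1(T,R,s_1)$ by $C_1(T,1,s_1)$ by monotonicity of the $L^\infty$ norm in the domain. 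Part (i), by contrast, is a strategy outline rather than a proof. It correctly identifies the route taken in \cite{gualdani2018global,gualdani2018review} --- a De Giorgi level-set iteration in which the $\varepsilon$-Poincar\'e inequality \eqref{poincare} absorbs the quadratic reaction term and the weighted Fisher-information bound extracted from \eqref{prevrel} supplies the starting integrability --- but the genuinely delicate steps are asserted, not executed: closing the energy inequality when the mobility $\mathcal{L}\rho$ has no a priori positive lower bound (so the weighted energy does not control an unweighted Sobolev norm without further input), the precise weighted Sobolev or interpolation inequality that converts the energy gain into an $L^{2+\delta}$ improvement on level sets, and the bookkeeping in the shrinking time slices that produces the exponent $s_1>1$. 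As a reconstruction of the cited argument your outline is faithful; as a self-contained proof, part (i) would still need to be carried out in full.
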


\subsection{Main results}

The first main result of this note is the following gradient flow characterization of the isotropic
Landau equation.
\begin{thm}
  The isotropic Landau equation (\ref{landau}) can be viewed as the gradient flow for the Boltzmann
  Shannon entropy $\mathcal{E}(\rho) = \int \rho\log\rho dx$,
  \begin{align*}
    \d_t \rho = \nabla \cdot \left(\int_{\R^3}K(x,y) \nabla \frac{\delta \mathcal{E}}{\delta\rho}(y) dy\right)~~\text{with}~~ K(x,y)= \delta_{\{x=y\}} \rho(x) \mathcal{L}\rho(x) - \frac{\rho(x)\rho(y)}{4\pi|x-y|}.
  \end{align*}
\end{thm}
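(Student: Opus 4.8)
The statement is an identity between two expressions for $\partial_t\rho$, so the plan is to compute the right-hand side explicitly and match it with \eqref{landau}, and then to verify that $K$ genuinely defines a symmetric, positive semidefinite bilinear form so that the term ``gradient flow'' is earned. First I would note that for the Boltzmann--Shannon entropy $\mathcal{E}(\rho)=\int\rho\log\rho\,dx$ one has $\frac{\delta\mathcal{E}}{\delta\rho}=\log\rho+1$, hence $\nabla\frac{\delta\mathcal{E}}{\delta\rho}=\nabla\rho/\rho$. I would then split $K(x,y)$ into its diagonal part $\delta_{\{x=y\}}\rho(x)\mathcal{L}\rho(x)$ and its nonlocal part $-\frac{\rho(x)\rho(y)}{4\pi|x-y|}$, and evaluate the flux $F(x):=\int_{\R^3}K(x,y)\nabla\frac{\delta\mathcal{E}}{\delta\rho}(y)\,dy$ term by term. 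The diagonal part collapses to $\rho(x)\mathcal{L}\rho(x)\cdot\frac{\nabla\rho(x)}{\rho(x)}=\mathcal{L}\rho(x)\,\nabla\rho(x)$, while the nonlocal part gives $-\rho(x)\int_{\R^3}\frac{\nabla_y\rho(y)}{4\pi|x-y|}\,dy$.

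The one computation worth spelling out is that nonlocal integral: integrating by parts in $y$ and using the elementary identity $\nabla_y\frac{1}{|x-y|}=-\nabla_x\frac{1}{|x-y|}$, one rewrites $\int_{\R^3}\frac{\nabla_y\rho(y)}{4\pi|x-y|}\,dy=\nabla_x\int_{\R^3}\frac{\rho(y)}{4\pi|x-y|}\,dy=\nabla\mathcal{L}\rho(x)$, since $c_{3,1}=\frac{1}{4\pi}$. Hence $F=\mathcal{L}\rho\,\nabla\rho-\rho\,\nabla\mathcal{L}\rho$ and $\nabla\cdot F=\partial_t\rho$ is precisely \eqref{landau}. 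To confirm that $K$ is a legitimate (nonlocal) metric tensor I would check that the bilinear form $\langle\phi,\psi\rangle_\rho:=\iint_{\R^3\times\R^3}K(x,y)\,\nabla\phi(x)\cdot\nabla\psi(y)\,dx\,dy$ is symmetric --- immediate from the symmetry of $K$ in $(x,y)$ --- and nonnegative: using $\rho(x)\mathcal{L}\rho(x)=\int_{\R^3}\frac{\rho(x)\rho(y)}{4\pi|x-y|}\,dy$ and symmetrizing in $x\leftrightarrow y$ turns it into $\langle\phi,\phi\rangle_\rho=\frac{1}{8\pi}\iint_{\R^3\times\R^3}\frac{\rho(x)\rho(y)}{|x-y|}\,|\nabla\phi(x)-\nabla\phi(y)|^2\,dx\,dy\geq0$. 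Choosing $\phi=\frac{\delta\mathcal{E}}{\delta\rho}$ recovers the dissipation identity \eqref{dissp} exactly, so the entropy decays at the rate equal to the squared metric norm of its own gradient, which is the defining property of a gradient flow.

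The hard part is not the algebra but making these formal steps rigorous for the class of solutions at hand. I would need to give a precise meaning to the diagonal kernel $\delta_{\{x=y\}}$ (as the multiplication operator $\phi\mapsto\rho\,\mathcal{L}\rho\,\phi$ obtained in the local limit) and to justify pairing it, and the singular Riesz kernel, against the test field $\nabla\rho/\rho$, which is only controlled through the time-integrated Fisher information $\int_0^T\int_{\R^3}\frac{|\nabla\sqrt\rho|^2}{1+|x|}\,dx\,dt\leq C_T$. The integration by parts in the nonlocal term also needs the boundary contributions at infinity and near $x=y$ to vanish. Here I would lean on the conditional regularity recalled above --- the $L^\infty_{loc}$ bound \eqref{fracbd0} on $\rho$, the uniform-in-space bound \eqref{fracbd} on $\mathcal{L}\rho$, and the finiteness of mass, energy and entropy --- to guarantee that every integral converges and no boundary term survives, which closes the argument.
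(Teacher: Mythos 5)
Your proposal is correct and rests on the same core computation as the paper's proof: pairing $K$ against $\nabla\frac{\delta\mathcal{E}}{\delta\rho}=\nabla\rho/\rho$ and commuting $\nabla$ with the Riesz potential to produce the flux $\mathcal{L}\rho\,\nabla\rho-\rho\,\nabla\mathcal{L}\rho$. The paper phrases this through the metric-tensor duality $g_{\mathcal{K}}(\grad\mathcal{E}|_\rho,\sigma)=\int\frac{\delta\mathcal{E}}{\delta\rho}\,\sigma\,dx$ rather than evaluating the flux directly, and it verifies positivity of $K$ separately (via Young's inequality, equivalent to your symmetrization), but these are presentational differences only.
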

Based on this structure, we can define a distance function $W_K$ (\ref{wk}) in the Benamou-Brenier
fashion, and have a lower bound with respect to the Wasserstein-$1$ distance (see Section 2.1). The
corresponding geodesic equations can also be computed from the Hamiltonian (in Section 2.2).

In \cite{carrillo2020,carrillo2019}, the gradient flow structure for the classical Landau equation can be written as
\begin{align*}
          \d_t \rho  =   \nabla \cdot \left(\int_{\R^3}\rho(x)\rho(y)|x-y|^{2+\gamma}\Pi[x-y](\nabla \log \rho(x)-\nabla\log \rho(y)) dy\right),
\end{align*}
with the projection matrix $\Pi[z] = I-\frac{z\otimes z}{|z|^2}$. It is easy to check that, when $\gamma = -d= -3$ and $\pi[z]$ is set to be $1$, the above two gradient flow structures are equivalent. 

Let us denote $\rho_t \equiv \rho$ and $K_{\rho_t} \equiv K$ to emphasize the density path. The
second main result of this note provides a time-dependent convergence rate result for the entropy
functional by estimating its Hessian operator. The proof essentially follows the Bakry-Emery
strategy and assumes that $\rho_t$ is of sufficient regularity throughout the computations, which has been proven true for radially symmetric solutions.
\begin{thm}\label{convergencerate}
  Along the gradient flow (\ref{landau}), equipped with $\Phi_t = -\log \rho_t$, we can compute the Riemannian Hessian operator of the entropy as
  \begin{align}
    \frac{d^2}{dt^2}\mathcal{E}(\rho_t)  =& \nonumber -\frac{3}{2}
  \int \rho_t^2(-\Delta)^{-1} \rho_t|\nabla \Phi_t|^2 dx + \int \rho_t^2\nabla \Phi_t (-\Delta)^{-1}(\rho_t \nabla \Phi_t ) dx\\
  &-\frac{1}{4}\int \nabla \rho_t \nabla ((-\Delta)^{-1} \rho_t)^2|\nabla \Phi_t|^2 dx+\int \rho_t ((-\Delta)^{-1}\rho_t  )^2 ||\nabla^2 \Phi_t||^2 dx\label{convrate}.
  \end{align}
  With an additional assumption that if there exists $\gamma\in (0,1/7)$ such that 
  \begin{equation}\label{eqnpos}
     \int\left( 2\gamma\rho_t - \frac{|\nabla\rho_t|^2(-\Delta)^{-1}\rho_t}{\rho_t^2}\right)\left(\rho_t \nabla \Phi_t (-\Delta)^{-1}(\rho_t \nabla \Phi_t )+3\rho_t^2\right) dx\ge 0,
  \end{equation}
 we then have the convergence rate for the entropy
  \begin{align*}
    \frac{d}{dt}\mathcal{E}(\rho_t) \leq -\alpha \int_{t}^{\infty} \int \rho_t^3 dx dt,
  \end{align*}
 where $\alpha\in(0,1)$ is a constant depending on $\gamma$.
\end{thm}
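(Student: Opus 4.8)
The plan is to follow the Bakry--\'Emery strategy: differentiate the entropy dissipation once more in time, isolate a nonnegative Bochner-type square, and absorb the remaining terms using hypothesis \eqref{eqnpos}. Write $\mathcal{L}:=(-\Delta)^{-1}$ and $\nabla\Phi_t=-\nabla\rho_t/\rho_t$. Starting from \eqref{dissp}, expanding the square and using $c_{3,1}=1/(4\pi)$ gives
\[
  -\frac{d}{dt}\mathcal{E}(\rho_t)=\int\rho_t\,\mathcal{L}\rho_t\,|\nabla\Phi_t|^2\,dx-\int\rho_t\,\nabla\Phi_t\cdot\mathcal{L}(\rho_t\nabla\Phi_t)\,dx .
\]
Since $\rho_t\nabla\Phi_t=-\nabla\rho_t$, one integration by parts, together with the facts that $\mathcal{L}$ commutes with derivatives and $\mathcal{L}(-\Delta)=\Id$, collapses the second integral to $\int\rho_t^2\,dx$; hence $\mathcal{I}(t):=-\tfrac{d}{dt}\mathcal{E}(\rho_t)=\int\rho_t\,\mathcal{L}\rho_t\,|\nabla\Phi_t|^2\,dx-\int\rho_t^2\,dx$, and everything reduces to computing $-\tfrac{d}{dt}\mathcal{I}(t)$.

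To obtain \eqref{convrate} I would differentiate $\mathcal{I}(t)$ using the equation as $\partial_t\rho_t=\mathcal{L}\rho_t\,\Delta\rho_t+\rho_t^2$ (equivalently the divergence form \eqref{landau}), the relation $\partial_t\Phi_t=-\partial_t\rho_t/\rho_t$, and the pointwise identity $\Delta\rho_t/\rho_t=|\nabla\Phi_t|^2-\Delta\Phi_t$. The terms carrying a third derivative of $\Phi_t$ are handled by the Bochner move: for a weight $g$,
\[
  \int g\,\nabla\Phi_t\cdot\nabla\Delta\Phi_t\,dx=\int g\,\nabla\Phi_t\cdot\nabla\cdot(\nabla^2\Phi_t)\,dx=-\int g\,\|\nabla^2\Phi_t\|^2\,dx-\tfrac12\int\nabla g\cdot\nabla|\nabla\Phi_t|^2\,dx ,
\]
which is what generates the $\|\nabla^2\Phi_t\|^2$ term in \eqref{convrate}; the remaining lower-order pieces reorganize into the first three terms by repeatedly using the selfadjointness and commutation of $\mathcal{L}$, the relations $\nabla\rho_t=-\rho_t\nabla\Phi_t$ and $-\Delta\mathcal{L}\rho_t=\rho_t$, and integration by parts. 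I expect this step to be the main obstacle: it is long and bookkeeping-heavy, the Hessian-square term gets contributions from several pieces so its final coefficient must be tracked carefully, and each integration by parts must be justified, which is exactly where the standing regularity hypotheses and the a priori bounds recalled in the introduction enter.

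Granting \eqref{convrate}, the convergence rate is comparatively short. The key algebraic identity is
\[
  \int\rho_t^3\,dx=2\int\rho_t^2\,\nabla\Phi_t\cdot\mathcal{L}(\rho_t\nabla\Phi_t)\,dx ,
\]
obtained from $\rho_t=-\Delta\mathcal{L}\rho_t$ by the same integration by parts as above; thus the second term of \eqref{convrate} equals $\tfrac12\int\rho_t^3\,dx\ge0$ and the fourth is manifestly $\ge0$. Substituting $\nabla((\mathcal{L}\rho_t)^2)=-2\mathcal{L}\rho_t\,\mathcal{L}(\rho_t\nabla\Phi_t)$ and $\nabla\rho_t=-\rho_t\nabla\Phi_t$ into the third term and combining it with the first yields
\[
  -\tfrac12\int\frac{|\nabla\rho_t|^2\mathcal{L}\rho_t}{\rho_t^2}\Big(\rho_t\nabla\Phi_t\cdot\mathcal{L}(\rho_t\nabla\Phi_t)+3\rho_t^2\Big)\,dx ,
\]
and \eqref{eqnpos} bounds this from below by $-\gamma\int\rho_t\big(\rho_t\nabla\Phi_t\cdot\mathcal{L}(\rho_t\nabla\Phi_t)+3\rho_t^2\big)\,dx$, which the identity above rewrites as $-\tfrac{7\gamma}{2}\int\rho_t^3\,dx$. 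Summing the four contributions,
\[
  \frac{d^2}{dt^2}\mathcal{E}(\rho_t)\ \ge\ \Big(\tfrac12-\tfrac{7\gamma}{2}\Big)\int\rho_t^3\,dx\ =:\ \alpha\int\rho_t^3\,dx ,\qquad \alpha=\tfrac{1-7\gamma}{2}\in(0,1)\ \text{ for }\ \gamma\in(0,\tfrac17).
\]

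Finally, since $\int\rho_t^3\ge0$ the bound above shows $g(t):=\tfrac{d}{dt}\mathcal{E}(\rho_t)$ is nondecreasing; by \eqref{dissp} it is also $\le0$, so $g(t)\uparrow L\le 0$ as $t\to\infty$. If $L<0$, then for $t\ge t_0$ one gets $\mathcal{E}(\rho_t)\le\mathcal{E}(\rho_{t_0})+L(t-t_0)$, i.e. linear decay, which contradicts the elementary lower bound $\int\rho_t\log\rho_t\ge-\tfrac32\log E(t)-C$ combined with the polynomial growth $E(t)\le C(1+t^{2p/(2p-4+\epsilon)})$ recalled in the introduction; hence $L=0$. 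Integrating $\tfrac{d^2}{ds^2}\mathcal{E}(\rho_s)\ge\alpha\int\rho_s^3\,dx$ over $s\in(t,\infty)$ then gives $-\tfrac{d}{dt}\mathcal{E}(\rho_t)=-g(t)\ge\alpha\int_t^\infty\!\int\rho_s^3\,dx\,ds$, which is the asserted rate (and in particular $\int_t^\infty\!\int\rho_s^3\,dx\,ds\le\mathcal{I}(t)<\infty$).
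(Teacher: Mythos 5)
Your proposal establishes only half of the theorem. The second half --- passing from \eqref{convrate} and \eqref{eqnpos} to $\frac{d^2}{dt^2}\mathcal{E}(\rho_t)\ge\alpha\int\rho_t^3\,dx$ with $\alpha=(1-7\gamma)/2$ and then integrating on $[t,\infty)$ --- is correct, and is in fact cleaner than the paper's version: the paper reaches the same conclusion by splitting $\int \rho_t^2\nabla \Phi_t (-\Delta)^{-1}(\rho_t \nabla \Phi_t )\,dx=\frac12\int\rho_t^3\,dx$ into an $\alpha,\beta,\gamma$ combination with $2(\alpha+\beta)+\gamma=1$ and $\beta=3\gamma$, whereas you bound the first and third terms of \eqref{convrate} directly by $-\frac{7\gamma}{2}\int\rho_t^3\,dx$ using \eqref{eqnpos}; the two are equivalent. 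You also supply a justification (monotonicity of $\frac{d}{dt}\mathcal{E}$, nonpositivity from \eqref{dissp}, and the entropy--second-moment lower bound) for why $\frac{d}{dt}\mathcal{E}(\rho_t)\to 0$ as $t\to\infty$, which the paper's ``integrate \eqref{ineq1} over $[t,\infty)$'' silently assumes. That is a genuine improvement.

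The gap is the identity \eqref{convrate} itself, which is the bulk of the paper's Section 3 and which you do not prove: you describe a strategy and explicitly defer the computation (``long and bookkeeping-heavy''). This is not a routine verification that can be waved through --- the specific coefficients $-\frac32$, $1$, $-\frac14$, $1$ are exactly what make the subsequent $\gamma<1/7$ threshold and $\alpha=(1-7\gamma)/2$ come out, so nothing downstream is secured until they are derived. Moreover, the route you sketch is not the paper's and is not obviously equivalent to it. The paper computes $I+II+III$ where the third term uses the \emph{geodesic} equation \eqref{geoeqn2} for $\d_t\Phi_t$ (i.e.\ it computes the Riemannian Hessian quadratic form $\langle\Hess\mathcal{E}|_{\rho_t}\Phi_t,\Phi_t\rangle$ evaluated at $\Phi_t=-\log\rho_t$), whereas you propose to differentiate $\mathcal{I}(t)=-\frac{d}{dt}\mathcal{E}(\rho_t)$ along the flow using $\d_t\Phi_t=-\d_t\rho_t/\rho_t$. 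In Otto calculus these are different objects: $-\frac{d}{dt}\|\grad\mathcal{E}\|^2_{g}=2\,\Hess\mathcal{E}(\grad\mathcal{E},\grad\mathcal{E})$, while the geodesic second derivative gives $\Hess\mathcal{E}(\grad\mathcal{E},\grad\mathcal{E})$ without the factor $2$. So even a flawless execution of your computation would be expected to produce \eqref{convrate} only up to an overall factor (and possibly with different internal cancellations), and you would need to reconcile your answer with the stated coefficients before the $\gamma<1/7$ conclusion follows. To match the paper you should either adopt its decomposition $I+II+III$ with $\d_t\Phi_t$ taken from Lemma \ref{geodeqn}, or carry your own computation to completion and verify that the resulting right-hand side still admits the decomposition into \eqref{eqnpos} plus $\alpha\int\rho_t^3\,dx$ plus the nonnegative Hessian term.
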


This result provides another view comparing to the dissipation-Fisher information relation
(\ref{prevrel}). The detailed computations, which heavily use the geodesic equations, the relation
$\Phi_t = -\log\rho_t$ along the gradient flow, and the Bochner's formula, will be given in Section 3. We would like to point out that it is possible to carry out similar computations for the original Landau equation. However, due to the existence of the projection matrix $\Pi[z]$ and the different steady solution, the computations can be significantly more complicated and the convergence result might also change.

\paragraph{Organization.}
The rest of the note is organized as follows. Section 2 details the gradient flow structure and
Section 3 gives the proof of Theorem \ref{convergencerate}.

\section{The gradient flow structure}

Consider the density space (or sometimes called the statistical manifold)
\begin{align}
  \mathcal{M} = \{ \text{non-negative functions } \rho \in \R^3  \text{ and } \int_{\R^3} \rho =1\}.
\end{align}
The tangent space of $\mathcal{M}$ at $\rho\in \mathcal{M}$ is given by
\begin{align}
  \mathcal{T}_{\rho}\mathcal{M} = \{\text{functions } \sigma\in \R^3 \text{ and }\int_{\R^3} \sigma = 0 \}.
\end{align}
The key object of this note is the nonlocal metric tensor defined as follows.

\begin{definition}{(Nonlocal metric tensor)}
  Given $\rho \in \mathcal{M}$, for $\sigma_{1,2}\in \mathcal{T}_{\rho}\mathcal{M}$, the 
  nonlocal metric tensor $g_{\mathcal{K}}$ is given by
  \begin{align}\label{metric1}
    g_{\mathcal{K}} (\sigma_1, \sigma_2) :=\langle \sigma_1, (-\mathcal{K})^{-1} \sigma_2)\rangle =
    \langle \Phi_1, -\mathcal{K} \Phi_2 \rangle,
  \end{align}
  where
  \begin{align}\label{operator}
    \mathcal{K} u(x) = \nabla \cdot \left(\int_{\R^3}K(x,y) \nabla u(y) dy\right)~~\text{with}~~ K(x,y)=
    \delta_{\{x=y\}} \rho(x) \mathcal{L}\rho(x) - \frac{\rho(x)\rho(y)}{4\pi|x-y|},
  \end{align}
  and $\Phi_i$ is a weak solution to the equation
  \begin{align}\label{sig1}
    \sigma_{i}(x) = -\nabla \cdot\left( \int_{\R^3} K(x,y)\nabla \Phi_i(y)dy\right) = -\mathcal{K} \Phi_i (x),~~ i = 1,2.
  \end{align}
\end{definition}

To show that the metric tensor $g_{\mathcal{K}}$ is well-defined, one needs to verify that it is
bilinear, symmetric, and positive semi-definite. The first two conditions can be checked directly
while the last condition requires that for any $u$ in some Banach space
\begin{align}\label{pos}
\iint \frac{\rho(x)\rho(y) u^2(x)}{|x-y|} - \frac{\rho(x)\rho(y)u(x) u(y)}{|x-y|} dxdy \geq 0.
\end{align}
This inequality holds by simply using the symmetry and Young's inequality,
\begin{align*}
  \iint \frac{\rho(x)\rho(y) u^2(x)}{|x-y|} dxdy & = \iint \frac{\rho(x)\rho(y)
    (u^2(x)/2+u^2(y)/2)}{|x-y|} dxdy \\
  & \geq \iint \frac{\rho(x)\rho(y)u(x) u(y)}{|x-y|} dxdy.
\end{align*}

The following theorem states that the isotropic Landau equation is the gradient flow of the entropy
with respect to the Riemannian structure introduced above.
\begin{thm}
  Given the Boltzmann Shannon entropy $\mathcal{E}: \mathcal{M}\to \R$ where
  \begin{align}\label{entropy}
    \mathcal{E}(\rho) = \int_{\R^3} \rho \log\rho dx,
  \end{align}
  under the nonlocal metric tensor defined in (\ref{metric1}), the gradient flow dynamics of (\ref{entropy}) is exactly (\ref{landau}).
\end{thm}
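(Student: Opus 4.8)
The plan is to follow the standard Otto-calculus recipe for identifying a gradient flow: compute the differential of the entropy functional, convert it into a tangent vector via the metric tensor $g_{\mathcal K}$, and check that the resulting velocity field reproduces the right-hand side of \eqref{landau}. Concretely, I would first recall that on the statistical manifold $\mathcal M$ the gradient flow $\d_t\rho = -\operatorname{grad}_{g_{\mathcal K}}\mathcal E(\rho)$ is characterized by the condition that, for every test perturbation $\sigma\in\mathcal T_\rho\mathcal M$,
\begin{align*}
  g_{\mathcal K}(\d_t\rho,\sigma) = -\,d\mathcal E(\rho)[\sigma] = -\int_{\R^3}\frac{\delta\mathcal E}{\delta\rho}(x)\,\sigma(x)\,dx,
\end{align*}
where $\frac{\delta\mathcal E}{\delta\rho} = \log\rho + 1$. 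Since the constant $1$ integrates against $\sigma$ to zero (because $\int\sigma = 0$), the relevant first variation is simply $\log\rho$.

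The key computational step is to unwind the definition of $g_{\mathcal K}$. Writing $\d_t\rho = -\mathcal K\Phi$ for the associated potential $\Phi$ (i.e. $\Phi$ solves \eqref{sig1} with $\sigma_1 = \d_t\rho$), and using the second form of the metric in \eqref{metric1}, namely $g_{\mathcal K}(\sigma_1,\sigma_2) = \langle\Phi_1,-\mathcal K\Phi_2\rangle = \langle\Phi_1,\sigma_2\rangle$, the gradient-flow identity collapses to $\langle\Phi,\sigma\rangle = -\langle\log\rho,\sigma\rangle$ for all admissible $\sigma$. Hence $\Phi = -\log\rho$ up to an additive constant, which is immaterial since only $\nabla\Phi$ enters $\mathcal K$. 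Substituting $\Phi = -\log\rho$ into $\d_t\rho = -\mathcal K\Phi = \mathcal K(\log\rho)$ and expanding the operator $\mathcal K$ from \eqref{operator} gives
\begin{align*}
  \d_t\rho = \nabla\cdot\!\left(\int_{\R^3}\!\Big(\delta_{\{x=y\}}\rho(x)\mathcal L\rho(x) - \frac{\rho(x)\rho(y)}{4\pi|x-y|}\Big)\nabla\log\rho(y)\,dy\right).
\end{align*}
The delta term contributes $\nabla\cdot(\rho\,\mathcal L\rho\,\nabla\log\rho) = \nabla\cdot(\mathcal L\rho\,\nabla\rho)$, using $\rho\nabla\log\rho = \nabla\rho$. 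The nonlocal term contributes $-\nabla\cdot\!\big(\frac{\rho(x)}{4\pi}\int\frac{\rho(y)\nabla\log\rho(y)}{|x-y|}dy\big) = -\nabla\cdot\!\big(\frac{\rho(x)}{4\pi}\int\frac{\nabla\rho(y)}{|x-y|}dy\big) = -\nabla\cdot(\rho\,\nabla\mathcal L\rho)$, since $\frac{1}{4\pi}\int\frac{\nabla\rho(y)}{|x-y|}dy = \nabla(-\Delta)^{-1}\rho = \nabla\mathcal L\rho$ (integrating by parts in $y$, or differentiating the Riesz kernel with $c_{3,1}=\frac1{4\pi}$). Adding the two pieces yields exactly $\d_t\rho = \nabla\cdot(\mathcal L\rho\,\nabla\rho - \rho\,\nabla\mathcal L\rho)$, which is \eqref{landau}.

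The main obstacle is not the algebra but the functional-analytic bookkeeping: one must justify that $\mathcal K$ is invertible on $\mathcal T_\rho\mathcal M$ (or at least that \eqref{sig1} admits a weak solution $\Phi$ unique up to constants) so that the identification $\Phi = -\log\rho$ is legitimate, and one must verify that $-\log\rho$ has enough regularity and decay for all the integrations by parts in the computation above to be valid. This is where the conditional estimates quoted earlier — the uniform bounds \eqref{fracbd0}–\eqref{fracbd} on $\rho$ and $(-\Delta)^{-1}\rho$, together with the positive semi-definiteness \eqref{pos} that was already established — enter: positive semi-definiteness of $\mathcal K$ on the tangent space gives the well-posedness of the elliptic problem \eqref{sig1} via Lax–Milgram in the appropriate weighted space, and the pointwise bounds control the nonlocal convolution term. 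I would state these as the standing regularity hypotheses (consistent with the paper's blanket assumption that $\rho_t$ is sufficiently regular), and then the proof reduces to the short computation sketched above.
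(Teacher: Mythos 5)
Your proposal is correct and follows essentially the same route as the paper: both unwind the metric through the potential $\Phi$ solving $\sigma=-\mathcal K\Phi$, use $\rho\nabla\log\rho=\nabla\rho$ for the local term and the commutation $\frac{1}{4\pi}\int\frac{\nabla\rho(y)}{|x-y|}\,dy=\nabla\mathcal L\rho$ for the nonlocal term. The only cosmetic difference is order of operations — you first identify $\Phi=-\log\rho$ and then expand $\mathcal K(\log\rho)$, whereas the paper computes $\operatorname{grad}\mathcal E|_\rho$ weakly against a test potential, which by the symmetry of $K(x,y)$ is the transposed version of the identical calculation.
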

\begin{proof}
  Note that
  \begin{align*}
    g_{\mathcal{K}} (\grad\mathcal{E}|_{\rho}, \sigma) = \int_{\R^3}  \frac{\delta\mathcal{E}}{\delta \rho}(x) \sigma(x) dx.
  \end{align*}
  By the definition (\ref{metric1}),
  \begin{align*}
    g_{\mathcal{K}} (\grad\mathcal{E}|_{\rho}, \sigma) = \int_{\R^3}
    \grad\mathcal{E}|_{\rho}(x) (-\mathcal{K})^{-1} \sigma dx = \int_{\R^3}
    \grad\mathcal{E}|_{\rho}(x) \Phi(x) dx.
  \end{align*}
  Plugging in the equation (\ref{sig1}) and using
  $\nabla\frac{\delta\mathcal{E}}{\delta\rho}=\frac{\nabla \rho}{\rho}$ leads to
  \begin{align*}
    \int_{\R^3}  \frac{\delta\mathcal{E}}{\delta \rho}(x) \sigma(x) dx &= -\int_{\R^3} \frac{\delta\mathcal{E}}{\delta \rho}(x) \nabla \cdot \bigg(\int_{\R^3} \bigg( \delta_{\{x=y\}} \rho(x)\mathcal{L}\rho(x) - \frac{\rho(x)\rho(y)}{4\pi|x-y|}\bigg) \nabla \Phi(y)dy\bigg) dx\\
    &= \int_{\R^3}\bigg( \int_{\R^3} \bigg( \delta_{\{x=y\}} \rho(x)\mathcal{L}\rho(x) - \frac{\rho(x)\rho(y)}{4\pi|x-y|}\bigg)\nabla \frac{\delta\mathcal{E}}{\delta \rho}(x) dx \bigg)\nabla \Phi(y) dy\\
    & = \int_{\R^3} \bigg(\nabla \rho(y)\mathcal{L}\rho(y) - \rho(y)\mathcal{L}\nabla\rho(y) \bigg)\nabla \Phi(y) dy\\
    & =-\int_{\R^3} \nabla \cdot\bigg(\mathcal{L}\rho(x)\nabla \rho(x) - \rho(x)\nabla\mathcal{L}\rho(x) \bigg)\Phi(x)dx.
  \end{align*}
  Therefore,
  \begin{align*}
    \grad\mathcal{E}|_{\rho}(x)  =  -\nabla \cdot\big(\mathcal{L}\rho(x)\nabla \rho(x) - \rho(x)\nabla\mathcal{L}\rho(x) \big).
  \end{align*}
  Since $\d_t \rho = -\grad\mathcal{E}|_{\rho}$, the Riemannian gradient flow in
  $(\mathcal{M}, g_{\mathcal{K}})$ gives the isotropic Landau equation as desired.
\end{proof}

Based on the above characterization, it is natural to define a Benamou-Brenier like formalism
\cite{benamou2000computational} of the distance similar to the classical Wasserstein distance, but
instead with a nonlocal mobility defined in (\ref{operator}).
\begin{definition}
  If the kernel $K(x,y)$ is well-defined, the distance function $W_{K}:\mathcal{M}\times
  \mathcal{M}\to \R_+$ between two functions $\rho_0(x) = \rho(x,0)$ and $\rho_1(x)= \rho(x,1)$ is
  \begin{align}
    \label{wk}
    W_{K} (\rho_0,\rho_1):=\inf_{v,\rho} \bigg( \int_0^1\int_{\R^3}\int_{\R^3} v(x,t)K(x,y)v(y,t)dxdy dt \bigg)^{1/2},
  \end{align}
  and the infimum is taken over all smooth paths $\rho: \R^3 \times[0,1] \to \R_+$ and vector field
  $v:\R^3 \times[0,1] \to \mathcal{T}\R^3$ satisfying the continuity equation
  \begin{align}
    \label{cteqn}
    \d_t \rho(x,t) + \nabla\cdot \left(\int_{\R^3} K(x,y) v(y,t) dy\right) = 0.
  \end{align}
\end{definition}

\subsection{Comparison to $L^1-$Wasserstein distance}

Based on the conditional regularity results provided in \cite{gualdani2018global,
  gualdani2018review}, we can obtain a lower bound for the distance introduced in (\ref{wk}) in
terms of the $L^1-$Wasserstein distance. Let us first recall that the $L^1-$Wasserstein distance
between $\rho_0, \rho_1 \in \mathcal{M}$ is defined as
\begin{align}\label{l1w}
  W_1(\rho_0,\rho_1): = \inf_{\pi\in \Gamma(\rho_0,\rho_1)} \int|x-y| \pi(dx, dy),
\end{align}
where $\Gamma(\rho_0,\rho_1)$ is the set of all couplings of $\rho_0$ and $\rho_1$.

\begin{thm}
  If $\rho_0$ is even, $\rho_0\log\rho_0\in L^1(\R^3)$ and satisfies the $\varepsilon-$Poincar{\'e}
  inequality (\ref{poincare}), then we have the bound
  \begin{align}\label{aug11}
    W_1(\rho_0,\rho_1) \leq C W_{K}(\rho_0,\rho_1).
  \end{align}
\end{thm}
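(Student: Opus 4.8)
The plan is to bound the nonlocal action functional in (\ref{wk}) from below by the standard Benamou-Brenier action that defines $W_1$ (or rather $W_2$, which then controls $W_1$), using the uniform-in-space bound on $(-\Delta)^{-1}\rho$ from the Lemma of \cite{gualdani2018review}. Fix a competitor path $(\rho_t, v_t)$ for $W_K(\rho_0,\rho_1)$ satisfying the nonlocal continuity equation (\ref{cteqn}). The key algebraic identity is that the nonlocal action density splits as in the positive-semidefiniteness computation (\ref{pos}): writing out $K(x,y)$,
\begin{align*}
  \iint v(x,t) K(x,y) v(y,t)\, dx dy = \int \rho_t \mathcal{L}\rho_t\, |v(x,t)|^2 dx - \iint \frac{\rho_t(x)\rho_t(y)\, v(x,t)\cdot v(y,t)}{4\pi|x-y|} dx dy,
\end{align*}
and by the same symmetrization/Young's inequality trick used for (\ref{pos}) the double integral is dominated by $\tfrac12\iint \frac{\rho_t(x)\rho_t(y)(|v(x,t)|^2+|v(y,t)|^2)}{4\pi|x-y|} dx dy = \int \rho_t \mathcal{L}\rho_t |v(x,t)|^2 dx$, so the whole action density is $\ge 0$ — but more importantly I want a \emph{positive} lower bound, so instead I keep the first term and simply drop (after bounding) the cross term, or more carefully note the action density is at least, say, a constant times $\int \rho_t |v_t|^2$ once I know $\mathcal{L}\rho_t$ is bounded below away from zero on the relevant region, which is the delicate point.

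\textbf{Step 1.} Use (\ref{fracbd}) to get $\|(-\Delta)^{-1}\rho_t\|_{L^\infty} \le C$ uniformly for $t$ in a compact subinterval of $(0,1)$; by the regularization of the flow this extends to all of $[0,1]$ after absorbing the endpoint behavior into the constant (the hypotheses guarantee $\rho_0\log\rho_0\in L^1$ and the $\varepsilon$-Poincaré inequality, exactly what the Lemma needs). \textbf{Step 2.} Define the \emph{local} momentum field $m(x,t) := \int K(x,y) v(y,t)\, dy$, so that the nonlocal continuity equation (\ref{cteqn}) becomes the ordinary continuity equation $\d_t\rho_t + \nabla\cdot m = 0$. \textbf{Step 3.} Show the pointwise (in $x,t$) bound
\begin{align*}
  \frac{|m(x,t)|^2}{\rho_t(x)} \le C \iint v(x,t) K(x,y) v(y,t)\, dx dy,
\end{align*}
or rather its integrated version $\int \frac{|m(x,t)|^2}{\rho_t(x)} dx \le C \iint v K v$, which is a Cauchy-Schwarz estimate on the quadratic form $\iint v K v$ against the "mass" represented by $K$, using the $L^\infty$ bound on $\mathcal{L}\rho_t$. \textbf{Step 4.} Recognize $\int_0^1\int \frac{|m|^2}{\rho_t} dx dt$ as (a multiple of) the Benamou-Brenier action for $W_2(\rho_0,\rho_1)^2$; conclude $W_2(\rho_0,\rho_1)^2 \le C\, W_K(\rho_0,\rho_1)^2$, and finish with the elementary $W_1 \le W_2$ on a probability space. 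Taking square roots and relabeling the constant gives (\ref{aug11}).

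\textbf{The main obstacle} is Step 3: controlling $\int |m|^2/\rho_t$ by the nonlocal action $\iint vKv$. The quadratic form $\iint vKv$ is $\int \rho_t\mathcal{L}\rho_t |v|^2 - \iint \frac{\rho_t(x)\rho_t(y)v(x)\cdot v(y)}{4\pi|x-y|}$, and one must show the negative cross term cannot make this form "too small" relative to the square of the local flux $m = \rho_t\mathcal{L}\rho_t\, v - \rho_t\,(-\Delta)^{-1}(\rho_t v)$. The clean way is to view $-K$ as a positive operator on vector fields and write $m = (-\mathcal{K}$-type$)$ applied to $v$, then use the Cauchy-Schwarz inequality associated to the bilinear form $\langle v, (-K) w\rangle$: for any test vector field $w$, $\left|\int m\cdot w\right| = |\langle v,(-K)w\rangle| \le \langle v,(-K)v\rangle^{1/2}\langle w,(-K)w\rangle^{1/2}$, and then choosing $w$ appropriately (roughly $w = $ a field whose $(-K)$-energy is controlled by $\int|w|^2\rho_t$, e.g. using that $(-K)$ is dominated above by the diagonal part $\rho_t\mathcal{L}\rho_t\,\mathrm{Id}$ via the same Young inequality) yields $\int \frac{|m|^2}{\rho_t\mathcal{L}\rho_t} \le \langle v,(-K)v\rangle$, and the $L^\infty$ bound $\mathcal{L}\rho_t \le C$ converts this into $\int \frac{|m|^2}{\rho_t} \le C\langle v,(-K)v\rangle$. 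Making this operator-theoretic duality argument rigorous — specifying the function space, justifying the solvability of (\ref{sig1}), and handling the Dirac mass in $K(x,y)$ — is where the real work lies; everything else is bookkeeping with known estimates.
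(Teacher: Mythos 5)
Your route is genuinely different from the paper's. The paper never passes through a local Benamou--Brenier action: it tests the nonlocal continuity equation (\ref{cteqn}) directly against a $1$-Lipschitz $\varphi$, applies Cauchy--Schwarz for the positive semidefinite form $\iint \cdot\, K \,\cdot$, bounds the resulting $\varphi$-energy $\int_0^1\iint \nabla\varphi\, K\,\nabla\varphi$ by $2\int_0^1\int\rho\,\|(-\Delta)^{-1}\rho\|_{L^\infty}\,dx\,dt \le C\int_0^1(1/t+1)^{s}dt<\infty$, and concludes by Kantorovich--Rubinstein duality. Your plan instead dominates the local flux $m=\int K(\cdot,y)v(y)\,dy$ by the nonlocal action via the duality $\left|\int m\cdot w\right|\le B(v,v)^{1/2}B(w,w)^{1/2}$ with $B(w,w)\le 2\int\rho\,\mathcal{L}\rho\,|w|^2$, and then invokes the classical Benamou--Brenier characterization. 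That core estimate (your Step 3) is sound and, if completed, would give the stronger conclusion $W_2\le C\,W_K$; the paper's argument is shorter but only yields $W_1$.

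The concrete gap is Step 1, and it propagates into Step 4. The bound (\ref{fracbd}) is \emph{not} uniform on $[0,1]$: it degenerates like $(1/t+1)^{s_2}$ as $t\to 0^+$ with $s_2>1/3$, and there is nothing in the hypotheses that lets you ``absorb the endpoint behavior into the constant'' --- the constant is genuinely unbounded near $t=0$, only integrable in time. Consequently your Step 3 gives $\int |m(\cdot,t)|^2/\rho_t\,dx\le C(t)\,B(v(\cdot,t),v(\cdot,t))$ with $C(t)\sim t^{-s_2}$, and the direct energy comparison
\begin{align*}
\int_0^1\!\!\int\frac{|m|^2}{\rho_t}\,dx\,dt\;\le\;\int_0^1 C(t)\,B(v,v)\,dt
\end{align*}
cannot be bounded by a constant times $\int_0^1 B(v,v)\,dt=W_K^2$, since $\sup_t C(t)=\infty$. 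The repair is to abandon the energy form and use the length form: $W_2\le\int_0^1\bigl(\int|m|^2/\rho_t\,dx\bigr)^{1/2}dt\le\int_0^1 C(t)^{1/2}B(v,v)^{1/2}dt\le\bigl(\int_0^1C(t)\,dt\bigr)^{1/2}\bigl(\int_0^1B(v,v)\,dt\bigr)^{1/2}$, which is finite precisely because $s_2$ can be taken in $(1/3,1)$. This Cauchy--Schwarz-in-time step, which separates the integrable singularity from the $W_K$ action, is exactly what the paper's duality computation does implicitly; without it your argument as written does not close. (A caveat you share with the paper: (\ref{fracbd}) is stated for solutions of (\ref{landau}), not for arbitrary competitor paths in the infimum (\ref{wk}), so strictly speaking the class of admissible paths must be restricted to those along which the $L^\infty$ estimate holds.)
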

\begin{proof}
  Let $\varphi: \R^3\to \R $ be a bounded $1-$Lipschitz function. Clearly $\varphi\in
  W^{1,\infty}(\R^3)$. Using the continuity equation (\ref{cteqn}) and integration by parts gives
  rise to
  \begin{align*}
    &\left|\int \varphi \rho_1 dx - \int \varphi \rho_0 dx \right| = \left|\int_0^1 \int \varphi \d_t \rho ~ dx dt\right| =
    \left|\int_0^1 \int_{\R^3\times \R^3}\nabla \varphi(x) K(x,y) v(y,t)~ dx dy dt\right| \\
    &\leq \bigg(\int_0^1  \int_{\R^3\times \R^3}\nabla \varphi(x) K(x,y)\nabla \varphi(y) ~ dxdy dt\bigg)^{\frac{1}{2}} \bigg(\int_0^1  \int_{\R^3\times \R^3}v(x,t) K(x,y) v(y,t) ~ dxdy dt\bigg)^{\frac{1}{2}}.
  \end{align*}
  The last inequality uses Cauchy-Schwarz since (\ref{pos}) holds. Now as $||\nabla
  \varphi||_{L^\infty} = 1$, using the uniform bound (\ref{fracbd}) we have that
  \begin{align*}
    \left| \int_0^1  \int_{\R^3\times \R^3}\nabla \varphi(x) K(x,y)\nabla \varphi(y) ~ dxdy dt \right| &\leq 2\int_0^1 \int_{\R^3} \rho(x,t)|| (-\Delta)^{-1}\rho(x,t) ||_{L^{\infty}(t,1;\R^3)} dx dt \\
    &\leq 2 C( n) \int_0^1 \bigg(\frac{1}{t}+1\bigg)^{s} dt \leq C.
  \end{align*}
  Taking the supremum over all bounded $1-$Lipschitz functions $\varphi$ on the left hand side
  accompanied with Kantorovich-Rubinstein duality, and taking the infimum over $v, \rho$ on the
  right hand side, we then obtain the inequality (\ref{aug11}).
\end{proof}

\subsection{The geodesic equations}

Let us consider the geometric action functional in the density space 
\begin{align}\label{action}
  \mathcal{L}(\rho_t, \d_t \rho_t) =\frac{1}{2} \int_0^1 \int  \d_t \rho_t (-\mathcal{K}_{\rho_t})^{-1} \d_t \rho_t dxdt,
\end{align}
where $\rho_t = \rho(x,t)$ is the density path connecting $\rho_0 $ and $\rho_1$,
$\mathcal{K}_{\rho_t}$ is the Onsager operator defined in (\ref{operator}) with its dependency on
$\rho_t$ explicitly written.

\begin{lemma}\label{geodeqn}
  With the relation $\Phi_t = (-\mathcal{K}_{\rho_t})^{-1} \d_t \rho_t$, the geodesic
  equations are
  \begin{equation} \label{geoeqn2}
    \left\{
    \begin{array}{l}
      \d_t\rho_t + \mathcal{K}_{\rho_t}\Phi_t = 0\\
      \d_t \Phi_t + \frac{1}{2}\big(|\nabla \Phi_t|^2 (-\Delta)^{-1}\rho_t + (-\Delta)^{-1}(|\nabla\Phi_t|^2 \rho_t)\big) - \nabla \Phi_t (-\Delta)^{-1}(\rho_t \nabla \Phi_t) = 0
    \end{array}
    \right.
  \end{equation}
\end{lemma}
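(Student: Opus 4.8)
The plan is to read \eqref{geoeqn2} as the Hamiltonian form of the Euler--Lagrange equations for the geometric action \eqref{action}. In the Riemannian manifold $(\mathcal{M},g_{\mathcal{K}})$ geodesics extremize $\tfrac12\int_0^1 g_{\mathcal{K}}(\d_t\rho_t,\d_t\rho_t)\,dt$; using \eqref{metric1} and the defining relation $\d_t\rho_t=-\mathcal{K}_{\rho_t}\Phi_t$, the integrand equals $\tfrac12\langle\Phi_t,-\mathcal{K}_{\rho_t}\Phi_t\rangle$, so the reduced Lagrangian is $\tfrac12\langle\Phi,-\mathcal{K}_\rho\Phi\rangle$ with $\Phi$ the Legendre dual of $\d_t\rho$ --- which is exactly the relation $\Phi_t=(-\mathcal{K}_{\rho_t})^{-1}\d_t\rho_t$ posited in the lemma. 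Passing to the associated Hamiltonian $H(\rho,\Phi)=\langle\Phi,\d_t\rho\rangle-\tfrac12\langle\Phi,-\mathcal{K}_\rho\Phi\rangle=\tfrac12\langle\Phi,-\mathcal{K}_\rho\Phi\rangle$, the geodesic flow is Hamilton's system $\d_t\rho=\delta H/\delta\Phi$, $\d_t\Phi=-\delta H/\delta\rho$. The first of these is immediate: $H$ is quadratic in $\Phi$ with the symmetric operator $-\mathcal{K}_\rho$, so $\delta H/\delta\Phi=-\mathcal{K}_\rho\Phi$ and we recover $\d_t\rho_t+\mathcal{K}_{\rho_t}\Phi_t=0$, the first line of \eqref{geoeqn2}. (Equivalently one may introduce a Lagrange multiplier for the continuity equation \eqref{cteqn} and eliminate it; this gives the same reduction, with the multiplier identified with $-\Phi_t$.)

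All the real work is in the second equation, i.e. in computing $\delta H/\delta\rho$ at frozen $\Phi$. First I would make the quadratic form explicit: splitting the kernel in \eqref{operator} into its local piece $\delta_{\{x=y\}}\rho(x)(-\Delta)^{-1}\rho(x)$ and the Riesz piece $-\rho(x)\rho(y)/(4\pi|x-y|)$ gives $\int K(x,y)\nabla\Phi(y)\,dy=\rho(x)(-\Delta)^{-1}\rho(x)\,\nabla\Phi(x)-\rho(x)(-\Delta)^{-1}(\rho\nabla\Phi)(x)$, and one integration by parts in $x$ (boundary terms discarded) yields
\[
  \langle\Phi,-\mathcal{K}_\rho\Phi\rangle=\int\rho\,(-\Delta)^{-1}\rho\,|\nabla\Phi|^2\,dx-\int\rho\,\nabla\Phi\cdot(-\Delta)^{-1}(\rho\nabla\Phi)\,dx=:T_1(\rho)-T_2(\rho),
\]
both $T_1,T_2$ being quadratic in $\rho$ of the type $\iint\rho(x)\rho(y)(\cdots)/(4\pi|x-y|)\,dxdy$ with $\Phi$ held fixed. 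Differentiating $T_1$ in $\rho$ and using the self-adjointness of $(-\Delta)^{-1}$ gives $\delta T_1/\delta\rho=(-\Delta)^{-1}\rho\,|\nabla\Phi|^2+(-\Delta)^{-1}(\rho|\nabla\Phi|^2)$, which is precisely the symmetrized bracket $|\nabla\Phi|^2(-\Delta)^{-1}\rho+(-\Delta)^{-1}(|\nabla\Phi|^2\rho)$ appearing in \eqref{geoeqn2}; differentiating the symmetric bilinear form $T_2$ produces the factor $2$, $\delta T_2/\delta\rho=2\,\nabla\Phi\cdot(-\Delta)^{-1}(\rho\nabla\Phi)$. Hence $\d_t\Phi=-\delta H/\delta\rho=-\tfrac12\big(\delta T_1/\delta\rho-\delta T_2/\delta\rho\big)$, and restoring the subscripts $\rho=\rho_t$, $\Phi=\Phi_t$ gives the second line of \eqref{geoeqn2}.

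The step I expect to be the real obstacle is not the algebra but the justification of this formal calculus. The operator $-\mathcal{K}_{\rho_t}$ is only positive \emph{semi}definite (its nonnegativity is \eqref{pos}), so the Legendre relation $\Phi_t=(-\mathcal{K}_{\rho_t})^{-1}\d_t\rho_t$, and with it the whole Hamiltonian picture, is meaningful only modulo the kernel of $\mathcal{K}_{\rho_t}$ and on a function class on which a right inverse exists; one should either carry the parametrization by $\Phi_t$ throughout (as the lemma does) or pass to the quotient. Likewise each integration by parts --- in $x$ to symmetrize the quadratic form and in $t$ to transfer $\d_t$ onto the momentum along the multiplier route --- needs enough decay of $\rho_t$, $\nabla\Phi_t$ and $(-\Delta)^{-1}\rho_t$ for the boundary contributions to vanish, and the distributional kernel $\delta_{\{x=y\}}$ must be read as genuine local multiplication by $\rho(x)(-\Delta)^{-1}\rho(x)$. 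I would therefore present \eqref{geoeqn2} under the standing smoothness-and-decay assumption already invoked for Theorem~\ref{convergencerate}, stating it as the formal geodesic equations of $(\mathcal{M},g_{\mathcal{K}})$.
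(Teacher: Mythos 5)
Your proposal is correct and follows essentially the same route as the paper: both pass to the Hamiltonian $\mathcal{H}(\rho,\Phi)=\tfrac12\iint\nabla\Phi(x)K_\rho(x,y)\nabla\Phi(y)\,dxdy$ via the Legendre transform, obtain the first equation from $\delta\mathcal{H}/\delta\Phi$, and derive the second by splitting the kernel into its local and Riesz parts and computing $\delta\mathcal{H}/\delta\rho$, with the symmetrization of $(-\Delta)^{-1}$ producing the bracketed term and the bilinearity of the Riesz part producing the factor cancelling the $\tfrac12$. Your added caveats about the semi-definiteness of $-\mathcal{K}_\rho$ and the formal nature of the integrations by parts are reasonable but not part of the paper's (also formal) argument.
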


\begin{proof}
The derivation follows directly from the Hamiltonian formulation, which by Legendre transform is
\begin{align}
  \mathcal{H}(\rho_t, \Phi_t) = \sup_{\Phi_t \in C^{\infty}(\mathcal{M})} \int \Phi_t \d_t\rho_t dx - \mathcal{L}(\rho_t, \d_t \rho_t).
\end{align}
The supremum is obtained when $\Phi_t = (-\mathcal{K}_{\rho_t})^{-1} \d_t \rho_t$, and the
Hamiltonian is
\begin{align}\label{8221}
  \mathcal{H}(\rho_t, \Phi_t) = \frac{1}{2}\int \rho_t (-\mathcal{K}_{\rho_t})^{-1} \rho_t dx =
  \frac{1}{2}\iint \nabla \Phi_t(x) K_{\rho_t} (x,y) \nabla \Phi_t(y) dxdy.
\end{align}
The co-geodesic flow satisfies
\begin{align}\label{822}
  \d_t\rho_t = \frac{\delta \mathcal{H}(\rho_t, \Phi_t) }{\delta\Phi_t}, ~~~ \d_t \Phi_t  = - \frac{\delta \mathcal{H}(\rho_t, \Phi_t)}{\delta \rho_t}.
\end{align}
The first equation of \eqref{geoeqn2} can be easily obtained from the first relation in (\ref{822}).
To obtain the second equation of \eqref{geoeqn2} from the second relation in (\ref{822}) here, we
write (\ref{8221}) as
\begin{align*}
  \mathcal{H}(\rho_t, \Phi_t) &=\frac{1}{2}\iint  \nabla \Phi_t(x)\bigg( \delta_{\{x=y\}}\rho_t(x)(-\Delta)^{-1}\rho_t(x) - \frac{\rho_t(x)\rho_t(y)}{4\pi|x-y|}\bigg) \nabla \Phi_t(y) dxdy\\
  &= \frac{1}{2} \int | \nabla \Phi_t(x)|^2 \rho_t(x) (-\Delta)^{-1}\rho_t(x) dx - \frac{1}{2}\iint  \nabla \Phi_t(x) \frac{\rho_t(x)\rho_t(y)}{4\pi|x-y|}\nabla \Phi_t(y) dxdy.
\end{align*}
Thus for any $v\in \mathcal{M}$,
\begin{align*}
  & \int \frac{\delta \mathcal{H}}{\delta \rho_t} v dx = \frac{d}{d\epsilon} \mathcal{H}(\rho_t + \epsilon v, \Phi_t)|_{\epsilon = 0} \\
  =& \frac{1}{2}\int  | \nabla \Phi_t(x)|^2 (v(x)(-\Delta)^{-1}\rho_t(x) + \rho_t(x) (-\Delta)^{-1}v(x)) dx\\
  &-\frac{1}{2} \iint \nabla \Phi_t(x) \frac{\rho_t(x)v(y) + v(x)\rho_t(y)}{4\pi|x-y|}\nabla \Phi_t(y) dxdy\\
  =&\frac{1}{2}\int \bigg(|\nabla \Phi_t|^2 (-\Delta)^{-1}\rho_t + (-\Delta)^{-1}(|\nabla\Phi_t|^2 \rho_t)\bigg) v dx -
  \int \nabla \Phi_t(x) (-\Delta)^{-1}(\rho_t \nabla \Phi_t) v dx,
\end{align*}
which gives the second equation that we stated.
\end{proof}

\section{Estimate of the Hessian operator}

This section is devoted to the proof of Theorem \ref{convergencerate}. The Hessian operator of the
entropy can be computed by taking the second time derivative of $\mathcal{E}$ along the geodesic
equations as in Lemma \ref{geodeqn}. Let us use $\delta \mathcal{E}$ to denote $\frac{\delta
  \mathcal{E}}{\delta \rho}$ for convenience. Note that
\begin{align*}
  \frac{d}{dt}\mathcal{E}(\rho_t) = \langle \grad\mathcal{E}|_{\rho_t}, \Phi_t \rangle &=\int \delta \mathcal{E} \d_t \rho_t dx \\
  &= \iint \nabla \delta \mathcal{E} (\rho_t(x))K_{\rho_t} (x,y) \nabla \Phi_t(y) dxdy.
\end{align*}
The second variation of $\mathcal{E}$ is
\begin{align*}
  \frac{d^2}{dt^2}\mathcal{E}(\rho_t) = \langle \Hess\mathcal{E}|_{\rho_t}\Phi_t, \Phi_t\rangle =&\int \nabla \frac{d}{dt} \delta \mathcal{E}(\rho_t(x))  K_{\rho_t} (x,y) \nabla \Phi_t(y) dxdy\\
  &+ \int \nabla \delta \mathcal{E} (\rho_t(x)) \d_t K_{\rho_t} (x,y) \nabla \Phi_t(y) dxdy\\
  &+\int \nabla \delta \mathcal{E} (\rho_t(x))K_{\rho_t} (x,y) \nabla \d_t \Phi_t(y) dxdy\\
  =& I+II+III.
\end{align*}

We begin the proof of Theorem \ref{convergencerate}. The following rather long computations will
involve the quantity
\begin{align}\label{oct281}
  \nonumber \d_t \rho_t =&  - \nabla \cdot \big( \rho_t (-\Delta)^{-1} \rho_t \nabla \Phi_t- \rho_t (-\Delta)^{-1}( \rho_t \nabla \Phi_t) \big)\\
  \nonumber=& - \nabla \rho_t (-\Delta)^{-1} \rho_t \nabla \Phi_t-  \rho_t (-\Delta)^{-1} \nabla \rho_t \nabla \Phi_t - \rho_t (-\Delta)^{-1} \rho_t \Delta \Phi_t \\
  &+\nabla \rho_t (-\Delta)^{-1} ( \rho_t \nabla \Phi_t ) + \rho_t^2.
\end{align}
The last term above uses the relation $\Phi_t = -\log\rho_t$ since we follow the Hessian operator
along the gradient flow. Thus
\begin{align}\label{oct282}
  \nabla\cdot(-\Delta)^{-1}(\rho_t\nabla \Phi_t) = \rho_t.
\end{align}
However, unless we are to analyze some difficult terms, the notation $\Phi_t$ will be kept for the majority of the
computation in order to explore the associated Hessian structure.

For the first term $I$, let us use (\ref{oct281}) and study its quadratic expansion,
\begin{align*}
  I =& \int \delta^2 \mathcal{E}(\rho_t(x)) \left(\nabla \cdot \left( \int K_{\rho_t} (x,y) \nabla \Phi_t(y) dy\right) \right)^2 dx\\
  =& \int \frac{1}{\rho_t(x)} \bigg(\nabla \cdot (\rho_t(x)(-\Delta)^{-1} \rho_t(x) \nabla \Phi_t (x) - \rho_t(x) (-\Delta)^{-1}(\rho_t(x) \nabla \Phi_t(x)))\bigg)^2 dx\\
  =&\int \frac{|\nabla \rho_t|^2}{\rho_t}((-\Delta)^{-1}\rho_t)^2 |\nabla \Phi_t|^2 dx + \int \rho_t ((-\Delta)^{-1}\nabla \rho_t)^2 |\nabla \Phi_t|^2  dx+\int \rho_t ((-\Delta)^{-1}\rho_t)^2 (\Delta\Phi_t)^2 dx \\
  &+\int \frac{|\nabla \rho_t|^2}{\rho_t} |(-\Delta)^{-1} ( \rho_t \nabla \Phi_t )|^2 dx + \int \rho_t^3 dx + \int \nabla \rho_t \nabla ((-\Delta)^{-1} \rho_t)^2|\nabla \Phi_t|^2 dx \\
  &+ 2\int \nabla \rho_t ( (-\Delta)^{-1} \rho_t)^2 \nabla \Phi_t \Delta \Phi_t dx - 2\int \frac{|\nabla \rho_t|^2}{\rho_t}(-\Delta)^{-1} \rho_t \nabla \Phi_t(-\Delta)^{-1} ( \rho_t \nabla \Phi_t ) dx\\
  & + 2\int |\nabla \rho_t|^2 (-\Delta)^{-1} \rho_t  dx + \int \rho_t \nabla ((-\Delta)^{-1} \rho_t)^2 \nabla \Phi_t \Delta \Phi_t dx\\
  & - 2\int \nabla \rho_t (-\Delta)^{-1} \nabla \rho_t \nabla \Phi_t (-\Delta)^{-1} ( \rho_t \nabla \Phi_t ) dx - 2\int \rho_t^2(-\Delta)^{-1}\rho_t\Delta \Phi_t dx  \\
  & - 2\int \nabla \rho_t (-\Delta)^{-1} \rho_t \Delta \Phi_t  (-\Delta)^{-1} ( \rho_t \nabla \Phi_t ) dx.
\end{align*}

For the second term $II$,
\begin{align*}
  II =& \int \frac{\nabla \rho_t}{\rho_t}\bigg(\delta_{\{x=y\}} \d_t \rho_t (-\Delta)^{-1}\rho_t+\delta_{\{x=y\}}  \rho_t (-\Delta)^{-1}\d_t\rho_t - \frac{\d_t \rho_t(x) \rho_t(y)+ \rho_t(x)\d_t \rho_t(y)}{4\pi|x-y|}\bigg)\nabla \Phi_t(y) dxdy\\
  =&\int \frac{\nabla \rho_t}{\rho_t} \d_t\rho_t  (-\Delta)^{-1}\rho_t \nabla \Phi_t dx + \int \nabla \rho_t (-\Delta)^{-1}\d_t\rho_t \nabla \Phi_t dx \\
  &- \int \frac{\nabla \rho_t}{\rho_t} \d_t \rho_t (-\Delta)^{-1}(\rho_t \nabla \Phi_t )dx- \int \nabla \rho_t (-\Delta)^{-1}(\d_t\rho_t \nabla \Phi_t )dx
  := II_1 + II_2 +II_3 + II_4.
\end{align*}
Plugging in \eqref{oct281}, we have
\begin{align*}
  II_1 =& -  \int \frac{|\nabla \rho_t|^2}{\rho_t}((-\Delta)^{-1}\rho_t)^2 |\nabla \Phi_t|^2 dx  -\frac{1}{2} \int \nabla \rho_t \nabla ((-\Delta)^{-1} \rho_t)^2|\nabla \Phi_t|^2 dx \\
  &- \int \nabla \rho_t ((-\Delta)^{-1} \rho_t)^2 \nabla \Phi_t \Delta \Phi_t dx + \int \frac{|\nabla \rho_t|^2}{\rho_t}  (-\Delta)^{-1}\rho_t \nabla \Phi_t(-\Delta)^{-1} ( \rho_t \nabla \Phi_t )  dx\\
  &- \int |\nabla \rho_t|^2  (-\Delta)^{-1}\rho_t   dx,
\end{align*}
and
\begin{align*}
  II_3 =&\int \frac{|\nabla \rho_t|^2}{\rho_t}(-\Delta)^{-1} \rho_t \nabla \Phi_t (-\Delta)^{-1} (\rho_t \nabla \Phi_t ) dx +\int  \nabla \rho_t(-\Delta)^{-1} \nabla \rho_t \nabla \Phi_t (-\Delta)^{-1} (\rho_t \nabla \Phi_t ) dx\\
  &+\int \nabla \rho_t (-\Delta)^{-1} \rho_t \Delta \Phi_t  (-\Delta)^{-1} ( \rho_t \nabla \Phi_t ) dx -\int \frac{|\nabla \rho_t|^2}{\rho_t} |(-\Delta)^{-1} ( \rho_t \nabla \Phi_t )|^2 dx+\frac{1}{2} \int \rho_t^3 dx.
\end{align*}
Moreover,
\begin{align*}
  II_4 =& -\int (-\Delta)^{-1}\nabla \rho_t \nabla \Phi_t \d_t \rho_t dx \\
  =& \frac{1}{2}\int \nabla \rho_t \nabla ((-\Delta)^{-1} \rho_t)^2|\nabla \Phi_t|^2 dx + \int \rho_t((-\Delta)^{-1}\nabla \rho_t)^2 |\nabla \Phi_t|^2  dx + \frac{1}{2}\int \rho_t \nabla ((-\Delta)^{-1} \rho_t )^2\nabla \Phi_t \Delta \Phi_t dx\\
  & - \int \nabla \rho_t  (-\Delta)^{-1} \nabla \rho_t \nabla \Phi_t (-\Delta)^{-1} ( \rho_t \nabla \Phi_t ) dx +\frac{1}{2} \int \rho_t^3 dx.
\end{align*}

For the third term, we use the geodesic equation in (\ref{geodeqn}) and note that the (\ref{landau})
also can be written as $\d_t\rho_t = (-\Delta)^{-1} \rho_t \Delta \rho_t + \rho_t^2$. Therefore,
\begin{align*}
  III =& \int \nabla \cdot \bigg(\int \big(\delta_{\{x=y\}}\rho_t(-\Delta)^{-1}\rho_t - \frac{\rho_t(x)\rho_t(y)}{4\pi|x-y|} \big) \frac{\nabla \rho_t(y)}{\rho_t(y)} dy\bigg)\\ &\times\bigg(\frac{1}{2}\big(|\nabla \Phi_t|^2 (-\Delta)^{-1}\rho_t + (-\Delta)^{-1}(|\nabla\Phi_t|^2 \rho_t)\big) - \nabla \Phi_t (-\Delta)^{-1}(\rho_t \nabla \Phi_t) \bigg) dx\\
  =& \int  ((-\Delta)^{-1} \rho_t \Delta \rho_t  + \rho_t^2) \bigg(\frac{1}{2}\big(|\nabla \Phi_t|^2 (-\Delta)^{-1}\rho_t + (-\Delta)^{-1}(|\nabla\Phi_t|^2 \rho_t)\big) - \nabla \Phi_t (-\Delta)^{-1}(\rho_t \nabla \Phi_t) \bigg) dx\\
  =&\frac{1}{2} \int ((-\Delta)^{-1} \rho_t)^2 \Delta\rho_t |\nabla \Phi_t|^2 dx + \frac{1}{2}\int (-\Delta)^{-1}((-\Delta)^{-1} \rho_t \Delta \rho_t  + \rho_t^2) \rho_t  |\nabla \Phi_t|^2 dx\\
  & - \int (-\Delta)^{-1} \rho_t \Delta \rho_t \nabla \Phi_t (-\Delta)^{-1}(\rho_t \nabla \Phi_t ) dx + \frac{1}{2} \int \rho_t^2 (-\Delta)^{-1}\rho_t  |\nabla \Phi_t|^2 dx \\
  &- \int \rho_t^2\nabla \Phi_t (-\Delta)^{-1}(\rho_t \nabla \Phi_t ) dx  := III_1+III_2+III_3+III_4+III_5.
\end{align*}

Combining $I, II_1, II_3$ and $II_4$ results in
\begin{align*}
  I+&II_1+II_3+II_4 \\
  =&  2\int \rho_t ((-\Delta)^{-1}\nabla \rho_t)^2 |\nabla \Phi_t|^2  dx+\int \rho_t ((-\Delta)^{-1}\rho_t)^2 (\Delta\Phi_t)^2 dx  + 2\int \rho_t^3 dx \\
  &+ \int \nabla \rho_t \nabla ((-\Delta)^{-1} \rho_t)^2|\nabla \Phi_t|^2 dx + \int \nabla \rho_t ( (-\Delta)^{-1} \rho_t)^2 \nabla \Phi_t \Delta \Phi_t dx \\
  & + \int |\nabla \rho_t|^2 (-\Delta)^{-1} \rho_t dx + \frac{3}{2}\int \rho_t \nabla ((-\Delta)^{-1} \rho_t)^2 \nabla \Phi_t \Delta \Phi_t dx\\
  & - 2\int \nabla \rho_t (-\Delta)^{-1} \nabla \rho_t \nabla \Phi_t (-\Delta)^{-1} ( \rho_t \nabla \Phi_t ) dx - 2\int \rho_t^2  (-\Delta)^{-1} \rho_t \Delta \Phi_t dx\\
  & - \int \nabla \rho_t (-\Delta)^{-1} \rho_t \Delta \Phi_t  (-\Delta)^{-1} ( \rho_t \nabla \Phi_t ) dx  := IV.
\end{align*}
We can now rearrange the first three lines in $IV$ in a nicer way by doing integration by parts,
\begin{align*}
\int \nabla \rho_t ( (-\Delta)^{-1} \rho_t)^2 \nabla \Phi_t \Delta \Phi_t dx =& -\int \rho_t \nabla ( (-\Delta)^{-1} \rho_t)^2 \nabla \Phi_t \Delta \Phi_t dx - \int \rho_t ( (-\Delta)^{-1} \rho_t)^2 (\Delta \Phi_t)^2 dx\\
&- \int \rho_t  ( (-\Delta)^{-1} \rho_t)^2 (\nabla \Phi_t, \nabla \Delta \Phi_t) dx,
\end{align*}
and as a result 
\begin{align*}
    IV =& 2\int \rho_t ((-\Delta)^{-1}\nabla \rho_t)^2 |\nabla \Phi_t|^2  dx + 2\int \rho_t^3 dx + \int \nabla \rho_t \nabla ((-\Delta)^{-1} \rho_t)^2|\nabla \Phi_t|^2 dx \\
& - \int \rho_t  ( (-\Delta)^{-1} \rho_t)^2 (\nabla \Phi_t, \nabla \Delta \Phi_t) dx + \frac{1}{2}\int \rho_t \nabla ((-\Delta)^{-1} \rho_t)^2 \nabla \Phi_t \Delta \Phi_t dx\\
& - 2\int \nabla \rho_t (-\Delta)^{-1} \nabla \rho_t \nabla \Phi_t (-\Delta)^{-1} ( \rho_t \nabla \Phi_t ) dx - 2\int \rho_t^2  (-\Delta)^{-1} \rho_t \Delta \Phi_t dx\\
& - \int \nabla \rho_t (-\Delta)^{-1} \rho_t \Delta \Phi_t  (-\Delta)^{-1} ( \rho_t \nabla \Phi_t ) dx + \int |\nabla \rho_t|^2 (-\Delta)^{-1} \rho_t dx.
\end{align*}
Note that
\begin{align*}
III_1 =& \frac{1}{2}\int  \rho_t ((-\Delta)^{-1} \rho_t)^2 \Delta (\nabla \Phi_t, \nabla \Phi_t)dx + \frac{1}{2}\int  \rho_t \Delta ((-\Delta)^{-1} \rho_t)^2 |\nabla \Phi_t|^2 dx \\
&+ \int \rho_t \nabla((-\Delta)^{-1} \rho_t)^2 \nabla |\nabla \Phi_t|^2 dx.
\end{align*}
Using the Bochner's formula
\begin{align}
\frac{1}{2} \Delta (\nabla \Phi_t, \nabla \Phi_t) - (\nabla \Phi_t, \nabla \Delta \Phi_t) = ||\nabla^2 \Phi_t||^2,
\end{align}
one can combine the first term in $III_1$ and the first term in the second line in $IV$ to obtain
\begin{align*}
  IV+III_1 =& 2\int \rho_t ((-\Delta)^{-1}\nabla \rho_t)^2 |\nabla \Phi_t|^2  dx + 2\int \rho_t^3 dx + \int \nabla \rho_t \nabla ((-\Delta)^{-1} \rho_t)^2|\nabla \Phi_t|^2 dx \\
& +\int \rho_t ( (-\Delta)^{-1}\rho_t  )^2 ||\nabla^2 \Phi_t||^2 dx+  \frac{1}{2}\int \rho_t \nabla ((-\Delta)^{-1} \rho_t)^2 \nabla \Phi_t \Delta \Phi_t dx\\
& - 2\int \nabla \rho_t (-\Delta)^{-1} \nabla \rho_t \nabla \Phi_t (-\Delta)^{-1} ( \rho_t \nabla \Phi_t ) dx - 2\int \rho_t^2  (-\Delta)^{-1} \rho_t \Delta \Phi_t dx\\
& - \int \nabla \rho_t (-\Delta)^{-1} \rho_t \Delta \Phi_t  (-\Delta)^{-1} ( \rho_t \nabla \Phi_t ) dx + \int |\nabla \rho_t|^2 (-\Delta)^{-1} \rho_t dx\\
&+ \frac{1}{2}\int  \rho_t \Delta ((-\Delta)^{-1} \rho_t)^2 |\nabla \Phi_t|^2 dx + \int \rho_t \nabla((-\Delta)^{-1} \rho_t)^2 \nabla |\nabla \Phi_t|^2 dx.
\end{align*}
We continue to apply the integration by parts here. Note that the first term in the fourth line
above can be written as
\begin{align*}
    - \int \nabla \rho_t (-\Delta)^{-1} &\rho_t \Delta \Phi_t  (-\Delta)^{-1} ( \rho_t \nabla \Phi_t ) dx = \int \Delta \rho_t \nabla \Phi_t(-\Delta)^{-1} \rho_t(-\Delta)^{-1} ( \rho_t \nabla \Phi_t ) dx\\
    &+\int \nabla \rho_t \nabla \Phi_t(-\Delta)^{-1}\nabla \rho_t(-\Delta)^{-1} ( \rho_t \nabla \Phi_t ) dx - \int|\nabla \rho_t|^2(-\Delta)^{-1}  \rho_t  dx,
\end{align*}
with the last term obtained by plugging in $\Phi_t = -\log \rho_t$. Furthermore, using this substitution can reformulate the second term in the third line of $IV+III_1$ as
\begin{align*}
  - 2\int \rho_t^2  (-\Delta)^{-1} \rho_t \Delta \Phi_t dx = 2\int \rho_t\Delta\rho_t (-\Delta)^{-1} \rho_t dx - 2\int |\nabla \rho_t|^2(-\Delta)^{-1}  \rho_t  dx.
\end{align*}
Finally, we can rearrange the terms into the form
\begin{align*}
  IV+III_1+III_3 =&2\int \rho_t ((-\Delta)^{-1}\nabla \rho_t)^2 |\nabla \Phi_t|^2  dx - \int \nabla \rho_t (-\Delta)^{-1} \nabla \rho_t \nabla \Phi_t (-\Delta)^{-1} ( \rho_t \nabla \Phi_t ) dx\\ &+\int \rho_t ((-\Delta)^{-1}\rho_t  )^2 ||\nabla^2 \Phi_t||^2 dx+ 2\int \rho_t^3 dx   + 2\int \rho_t\Delta\rho_t (-\Delta)^{-1} \rho_t dx\\
&+ \frac{1}{2}\int  \rho_t \Delta ((-\Delta)^{-1} \rho_t)^2 |\nabla \Phi_t|^2 dx + \int \rho_t \nabla((-\Delta)^{-1} \rho_t)^2 \nabla |\nabla \Phi_t|^2 dx\\
& + \int \nabla \rho_t \nabla ((-\Delta)^{-1} \rho_t)^2|\nabla \Phi_t|^2 dx + \frac{1}{2}\int \rho_t \nabla ((-\Delta)^{-1} \rho_t)^2 \nabla \Phi_t \Delta \Phi_t dx\\
&-2\int |\nabla \rho_t|^2(-\Delta)^{-1}
\rho_t dx:= V.
\end{align*}

Using $\Phi_t = -\log \rho_t$, the first line in $V$ can be reduced to
\begin{align*}
  2\int \rho_t ((-\Delta)^{-1}\nabla \rho_t)^2 |\nabla \Phi_t|^2  dx - \int \nabla \rho_t (-\Delta)^{-1} \nabla \rho_t \nabla \Phi_t (-\Delta)^{-1} ( \rho_t \nabla \Phi_t ) dx = \int \rho_t ((-\Delta)^{-1}\nabla \rho_t)^2|\nabla \Phi_t|^2 dx.
\end{align*}
Moreover, in the second line, we observe that
\begin{align*}
  2\int \rho_t^3 dx   &+ 2\int \rho_t\Delta\rho_t (-\Delta)^{-1} \rho_t dx = 2 \int \rho_t \d_t \rho_t dx = -2\int \rho_t \mathcal{K}_{\rho_t} \Phi_t dx\\
  &= -2 \int \rho_t^2 (-\Delta)^{-1}\rho_t  |\nabla \Phi_t|^2 dx +2 \int \rho_t^2\nabla \Phi_t (-\Delta)^{-1}(\rho_t \nabla \Phi_t ) dx  := VI.
\end{align*}
Therefore,
\begin{align*}
  VI+ III_4+III_5 = -\frac{3}{2} \int \rho_t^2 (-\Delta)^{-1}\rho_t  |\nabla \Phi_t|^2 dx + \int \rho_t^2\nabla \Phi_t (-\Delta)^{-1}(\rho_t \nabla \Phi_t ) dx .
\end{align*}
The third and fourth lines in $V$ can be organized as follows
\begin{align*}
    \frac{1}{2}\int & \rho_t \Delta ((-\Delta)^{-1} \rho_t)^2 |\nabla \Phi_t|^2 dx + \frac{5}{4}\int \rho_t \nabla((-\Delta)^{-1} \rho_t)^2 \nabla |\nabla \Phi_t|^2 dx + \int \nabla \rho_t \nabla ((-\Delta)^{-1} \rho_t)^2|\nabla \Phi_t|^2 dx \\
    &=-\frac{3}{4}\int  \rho_t \Delta ((-\Delta)^{-1} \rho_t)^2 |\nabla \Phi_t|^2 dx  - \frac{1}{4}\int \nabla \rho_t \nabla ((-\Delta)^{-1} \rho_t)^2|\nabla \Phi_t|^2 dx\\
    &=\frac{3}{2}\int \rho_t^2 (-\Delta)^{-1}\rho_t  |\nabla \Phi_t|^2 dx - \frac{3}{2}\int \rho_t  ((-\Delta)^{-1}\nabla \rho_t)^2|\nabla \Phi_t|^2 dx-\frac{1}{4}\int \nabla \rho_t \nabla ((-\Delta)^{-1} \rho_t)^2|\nabla \Phi_t|^2 dx.
\end{align*}
Collect all the terms above, we arrive at 
\begin{align*}
  V+III_4+III_5 =& - \frac{1}{2}\int \rho_t  ((-\Delta)^{-1}\nabla \rho_t)^2|\nabla \Phi_t|^2 dx-\frac{1}{4}\int \nabla \rho_t \nabla ((-\Delta)^{-1} \rho_t)^2|\nabla \Phi_t|^2 dx\\
  &+ \int \rho_t^2\nabla \Phi_t (-\Delta)^{-1}(\rho_t \nabla \Phi_t ) dx+\int \rho_t ((-\Delta)^{-1}\rho_t  )^2 ||\nabla^2 \Phi_t||^2 dx-2\int |\nabla \rho_t|^2(-\Delta)^{-1}
\rho_t dx,
\end{align*}
plus the remaining terms 
\begin{align*}
  II_2 + III_2 &= \int (-\Delta)^{-1}(\nabla \rho_t \nabla \Phi_t) \d_t\rho_t dx+ \frac{1}{2}\int (-\Delta)^{-1}((-\Delta)^{-1} \rho_t \Delta \rho_t  + \rho_t^2) \rho_t  |\nabla \Phi_t|^2 dx\\
  &= -\frac{1}{2}\int (-\Delta)^{-1} (\rho_t  |\nabla \Phi_t|^2)((-\Delta)^{-1} \rho_t \Delta \rho_t  + \rho_t^2) dx\\
  &=\frac{1}{2}
  \int \rho_t^2(-\Delta)^{-1} \rho_t|\nabla \Phi_t|^2 dx - \int\rho_t (-\Delta)^{-1} \nabla (\rho_t  |\nabla \Phi_t|^2)(-\Delta)^{-1} \nabla \rho_t dx,
\end{align*}
when using the integration by parts to move the Laplacian operator to other places. The second term above can be dealt with by viewing $\rho_t$ as
$-\nabla\cdot(-\Delta)^{-1}\nabla\rho_t$ and an integration by parts gives
\begin{align*}
  - \int\rho_t (-\Delta)^{-1} \nabla (\rho_t  |\nabla \Phi_t|^2)(-\Delta)^{-1} \nabla \rho_t dx = \frac{1}{2}\int \rho_t  ((-\Delta)^{-1}\nabla \rho_t)^2|\nabla \Phi_t|^2 dx.
\end{align*}
Now we are ready to wrap up all terms,
\begin{align*}
  I+II+III =& -\frac{3}{2}
  \int \rho_t^2(-\Delta)^{-1} \rho_t|\nabla \Phi_t|^2 dx + \int \rho_t^2\nabla \Phi_t (-\Delta)^{-1}(\rho_t \nabla \Phi_t ) dx\\
  &-\frac{1}{4}\int \nabla \rho_t \nabla ((-\Delta)^{-1} \rho_t)^2|\nabla \Phi_t|^2 dx+\int \rho_t ((-\Delta)^{-1}\rho_t  )^2 ||\nabla^2 \Phi_t||^2 dx.
\end{align*}
This proves \eqref{convrate}.

The term, $-\frac{1}{4}\int \nabla \rho_t \nabla ((-\Delta)^{-1} \rho_t)^2|\nabla \Phi_t|^2 dx$, can be rewritten using $\Phi_t = - \log \rho_t $ as
$$-\frac{1}{2}\int \rho_t \nabla \Phi_t \bigg( \frac{|\nabla\rho_t|^2(-\Delta)^{-1}\rho_t}{\rho_t^2}\bigg)(-\Delta)^{-1}(\rho_t \nabla \Phi_t ) dx.$$
We further decompose $\int \rho_t^2\nabla \Phi_t (-\Delta)^{-1}(\rho_t \nabla \Phi_t ) dx$ into
\begin{align*}
    \int \rho_t^2\nabla \Phi_t (-\Delta)^{-1}(\rho_t \nabla \Phi_t ) dx = \alpha \int \rho_t^3 dx + \beta \int \rho_t^3 dx + \gamma \int  \rho_t^2\nabla \Phi_t (-\Delta)^{-1}(\rho_t \nabla \Phi_t ) dx,
\end{align*}
with $2(\alpha+\beta)+\gamma=1$. Now reorganize terms, we obtain
\begin{align*}
    \beta \int \rho_t^3 dx-\frac{3}{2}
  \int \rho_t^2(-\Delta)^{-1} \rho_t|\nabla \Phi_t|^2 dx = \int \rho_t^2\bigg(\beta\rho_t -\frac{3}{2} \frac{|\nabla\rho_t|^2(-\Delta)^{-1}\rho_t}{\rho_t^2}\bigg) dx 
\end{align*}
and 
\begin{align*}
 \gamma\int \rho_t^2\nabla &\Phi_t (-\Delta)^{-1}(\rho_t \nabla \Phi_t ) dx-\frac{1}{4}\int \nabla \rho_t \nabla ((-\Delta)^{-1} \rho_t)^2|\nabla \Phi_t|^2 dx\\
  &= \frac{1}{2}\int \rho_t \nabla \Phi_t \bigg(2\gamma\rho_t - \frac{|\nabla\rho_t|^2(-\Delta)^{-1}\rho_t}{\rho_t^2}\bigg)(-\Delta)^{-1}(\rho_t \nabla \Phi_t ) dx. 
\end{align*}
With $\gamma=\beta/3 \in (0,1/7)$ for compatibility, then the positivity assumption \eqref{eqnpos}, with ignorance of the Hessian term, gives
\begin{align}\label{ineq1}
     \frac{d^2}{dt^2}\mathcal{E}(\rho_t) \geq \alpha \int \rho_t^3 dx.
\end{align}
Integrating (\ref{ineq1}) for $[t,\infty)$ results in
\begin{align*}
    \frac{d}{dt}\mathcal{E}(\rho_t)\leq - \alpha \int_{t}^{\infty} \int \rho_t^3 dx dt = -\alpha ||\rho_t||_{L^{3}([t,\infty);L^3(\R^3))}^3
\end{align*}
which concludes the proof of Theorem \ref{convergencerate}. This relation gives us another view of
time-dependent entropy dissipation comparable to (\ref{prevrel}).
\begin{remark} Recall the first variation of $\mathcal{E}$ is
\begin{align*}
  \frac{d}{dt}\mathcal{E}(\rho_t) = \langle \grad\mathcal{E}|_{\rho_t}, \Phi_t \rangle = -\iint \nabla \Phi_t(x) K_{\rho_t} (x,y) \nabla \Phi_t(y) dxdy.
\end{align*}
Ideally, one hopes to obtain the following inequality with some $\kappa(t)\ge 0$,
\begin{align*}
  \frac{d^2}{dt^2}\mathcal{E}(\rho_t)\ge -\kappa(t) \frac{d}{dt}\mathcal{E}(\rho_t),
\end{align*}
which can imply the convergence for the entropy with a rate depending on $\kappa(t)$. However, as
one can see from the rearrangement of $I+II+III$, we are not able to recover the full metric $\iint
\nabla \Phi_t(x) K_{\rho_t} (x,y) \nabla \Phi_t(y) dxdy$, although the terms with $\int \rho_t
\nabla \Phi_t (-\Delta)^{-1}(\rho_t \nabla \Phi_t ) dx$ are part of it. That is why we take the
assumption (\ref{eqnpos}) instead.
\end{remark}
\begin{remark}
Although the convergence rate estimate we provide here is a very crude bound, we can still observe
the slowness of the entropy decay rate. From Theorem 2 in \cite{gualdani2018review}, we can see that
$\rho_t$ decays asymptotically close to $1/t^{s}, t \gg 1$ (see (\ref{fracbd0})). It implies that
the entropy $\mathcal{E}(\rho_t)$ decreases at most polynomially fast when $t \gg 1$.
\end{remark}

\section*{Acknowledgments}
The authors thank Wuchen Li and Lenya Ryzhik for stimulating discussions. The first author is also grateful to Inwon Kim, Yao Yao, and Yuming Zhang for initial discussions. J.A. is supported by the
Joe Oliger Fellowship from Stanford University. The work of L.Y. is partially supported by the
National Science Foundation under award DMS-1818449.


\begin{thebibliography}{10}

\bibitem{basile2017gradient}
Giada Basile, Dario Benedetto, and Lorenzo Bertini.
\newblock A gradient flow approach to linear {B}oltzmann equations.
\newblock {\em arXiv preprint arXiv:1707.09204}, 2017.

\bibitem{benamou2000computational}
Jean-David Benamou and Yann Brenier.
\newblock A computational fluid mechanics solution to the monge-kantorovich
  mass transfer problem.
\newblock {\em Numerische Mathematik}, 84(3):375--393, 2000.

\bibitem{carrillo2020}
Jos{\'e} Carrillo, Matias Delgadino, Laurent Desvillettes, and Jeremy Wu.
\newblock The {L}andau equation as a gradient flow.
\newblock {\em arXiv preprint arXiv:2007.08591}, 2020.

\bibitem{carrillo2019}
Jose~A Carrillo, Jingwei Hu, Li~Wang, and Jeremy Wu.
\newblock A particle method for the homogeneous {L}andau equation.
\newblock {\em Journal of Computational Physics: X}, 7:100066, 2020.

\bibitem{carrillo2003kinetic}
Jos{\'e}~A Carrillo, Robert~J McCann, C{\'e}dric Villani, et~al.
\newblock Kinetic equilibration rates for granular media and related equations:
  entropy dissipation and mass transportation estimates.
\newblock {\em Revista Matematica Iberoamericana}, 19(3):971--1018, 2003.

\bibitem{carrillo2017numerical}
Jos{\'e}~Antonio Carrillo, Yanghong Huang, Francesco~Saverio Patacchini, and
  Gershon Wolansky.
\newblock Numerical study of a particle method for gradient flows.
\newblock {\em Kinetic \& Related Models}, 10(3):613--641, 2017.

\bibitem{carrillo2010nonlinear}
Jos{\'e}~Antonio Carrillo, Stefano Lisini, Giuseppe Savar{\'e}, and Dejan
  Slep{\v{c}}ev.
\newblock Nonlinear mobility continuity equations and generalized displacement
  convexity.
\newblock {\em Journal of Functional Analysis}, 258(4):1273--1309, 2010.

\bibitem{erbar2016gradient}
Matthias Erbar.
\newblock A gradient flow approach to the {B}oltzmann equation.
\newblock {\em arXiv preprint arXiv:1603.00540}, 2016.

\bibitem{garbuno2019gradient}
Alfredo Garbuno-Inigo, Franca Hoffmann, Wuchen Li, and Andrew~M Stuart.
\newblock Interacting langevin diffusions: Gradient structure and ensemble
  kalman sampler.
\newblock {\em SIAM Journal on Applied Dynamical Systems}, 19(1):412--441,
  2020.

\bibitem{gressman2012non}
Philip~T Gressman, Joachim Krieger, and Robert~M Strain.
\newblock A non-local inequality and global existence.
\newblock {\em Advances in Mathematics}, 230(2):642--648, 2012.

\bibitem{gualdani2016estimates}
Maria Gualdani and Nestor Guillen.
\newblock Estimates for radial solutions of the homogeneous {L}andau equation
  with coulomb potential.
\newblock {\em Analysis \& PDE}, 9(8):1773--1810, 2016.

\bibitem{gualdani2018review}
Maria Gualdani and Nicola Zamponi.
\newblock A review for an isotropic {L}andau model.
\newblock In {\em PDE Models for Multi-Agent Phenomena}, pages 115--144.
  Springer, 2018.

\bibitem{gualdani2018global}
Maria~Pia Gualdani and Nicola Zamponi.
\newblock Global existence of weak even solutions for an isotropic {L}andau
  equation with coulomb potential.
\newblock {\em SIAM Journal on Mathematical Analysis}, 50(4):3676--3714, 2018.

\bibitem{jordan1998variational}
Richard Jordan, David Kinderlehrer, and Felix Otto.
\newblock The variational formulation of the fokker--planck equation.
\newblock {\em SIAM Journal on Mathematical Analysis}, 29(1):1--17, 1998.

\bibitem{krieger2012global}
Joachim Krieger and Robert~M Strain.
\newblock Global solutions to a non-local diffusion equation with quadratic
  non-linearity.
\newblock {\em Communications in Partial Differential Equations},
  37(4):647--689, 2012.

\bibitem{li2018geometry}
Wuchen Li.
\newblock Geometry of probability simplex via optimal transport.
\newblock {\em arXiv preprint arXiv:1803.06360}, 2018.

\bibitem{li2019diffusion}
Wuchen Li.
\newblock Diffusion hypercontractivity via generalized density manifold.
\newblock {\em arXiv preprint arXiv:1907.12546}, 2019.

\bibitem{Li2019}
Wuchen Li and Lexing Ying.
\newblock Hessian transport gradient flows.
\newblock {\em Research in the Mathematical Sciences}, 6(4):34, Oct 2019.

\bibitem{liero2013gradient}
Matthias Liero and Alexander Mielke.
\newblock Gradient structures and geodesic convexity for reaction--diffusion
  systems.
\newblock {\em Philosophical Transactions of the Royal Society A: Mathematical,
  Physical and Engineering Sciences}, 371(2005):20120346, 2013.

\bibitem{liu2016stein}
Qiang Liu and Dilin Wang.
\newblock Stein variational gradient descent: A general purpose bayesian
  inference algorithm.
\newblock In {\em Advances in Neural Information Processing Systems}, pages
  2378--2386, 2016.

\bibitem{otto2001geometry}
Felix Otto.
\newblock The geometry of dissipative evolution equations: the porous medium
  equation.
\newblock {\em Comm. Partial Differential Equations}, 26:101--174, 2001.

\bibitem{santambrogio2017euclidean}
Filippo Santambrogio.
\newblock $\{$Euclidean, metric, and Wasserstein$\}$ gradient flows: an
  overview.
\newblock {\em Bulletin of Mathematical Sciences}, 7(1):87--154, 2017.

\end{thebibliography}

\end{document}